\def\tf{{\rm tf}}
\def\card{{\rm card}}
\def\End{{\rm End}}
\def\whLa{{\wh{L^a}}}
\def\tilalp{{\wt{\alp}}}
\def\spl{{\rm spl}}
\def\topdim{{\rm top.dim}}
\def\Topdim{{\rm Top.dim}}
\def\topdeg{{\rm top.tr.deg}}
\def\Topdeg{{\rm Top.tr.deg}}
\def\rmlog{{\rm log}}
\def\whKa{{\wh{K^a}}}
\def\whOmega{{\wh\Omega}}
\def\whd{{\wh{d}}}
\begin{document}

\author{Michael Temkin}
\title{Topological transcendence degree}
\thanks{This research was supported by the Israel Science Foundation (grant No. 1159/15). I would like to thank the referee for pointing out numerous inaccuracies in the first version of the paper.}
\address{Einstein Institute of Mathematics, The Hebrew University of Jerusalem, Giv'at Ram, Jerusalem, 91904, Israel}
\email{temkin@math.huji.ac.il}
\keywords{Non-archimedean fields, topological transcendence degree}
\begin{abstract}
Throughout the paper, an {\em analytic} field means a non-archime\-dean complete real-valued field, and our main objective is to extend the basic theory of transcendental extensions to these fields. One easily introduces a topological analogue of the transcendence degree, but, surprisingly, it turns out that it may behave very badly. For example, a particular case of a theorem of Matignon-Reversat, \cite[Th\`eor\'eme~2]{matignon-reversat}, asserts that if $\cha(k)>0$ then $\wh{k((t))^a}$ possesses non-invertible continuous $k$-endomorphisms, and this implies that the topological transcendence degree is not additive in towers. Nevertheless, we prove that in some aspects the topological transcendence degree behaves reasonably, and we show by explicit counter-examples that our positive results are pretty sharp. Applications to types of points in Berkovich spaces and untilts of $\wh{\bfF_p((t))^a}$ are discussed.
\end{abstract}

\maketitle
%\tableofcontents

\section{Introduction}

\subsection{Some history}
To some extent, a general theory of analytic fields was developed by Kaplansky and his school. This includes, for example, the theory of spherically complete fields, but not the questions we consider in this paper. Below we briefly discuss some cases where analytic fields show up in other areas of mathematics.

In the classical arithmetics and algebraic geometry, analytic fields usually showed up as completions of algebraic extensions of discretely valued fields. The class of analytic fields obtained this way is rather narrow, and this explains why the theory of their extensions was mainly developed for algebraic extensions (e.g., ramification theory) and their completions (e.g., Ax-Sen theorem).

Non-archimedean geometry naturally produces a much larger class of analytic fields and raises various questions about extensions of such fields, including those not of topologically algebraic type. In fact it seems, that most results about such extensions were established for the sake of applications to non-archimedean geometry. This started already in the framework of rigid geometry: the stability theorem, see \cite[Theorem~5.3.2/1]{bgr}, and the thesis of M. Matignon, where he studied one-dimensional transcendental extensions. Unfortunately, some results of Matignon were not known to the experts in other fields, including the theorem of Matignon-Reversat, \cite[Th\`eor\'eme~2]{matignon-reversat}.

In the framework of Berkovich non-archimedean geometry, the relation to analytic fields becomes even more natural, as the completed residue field $\calH(x)$ is the most important invariant associated to a point $x$ in a Berkovich space. In particular, the author extended the theory of one-dimensional extensions in \cite[\S6]{temst} and \cite[\S3]{insepunif}, and deduced some applications to analytic and birational geometries. In algebraic (or henselian) setting, similar questions were also studied by F.-V. Kuhlmann, e.g. see \cite{Kuhlmann-elimination1}.

Finally, analytic fields also appear as complete residue fields of analytic points on adic and perfectoid spaces, and this leads to similar questions.

\subsection{Motivation}

\subsubsection{Foundations}
Although the abstract theory of valued fields has been studied for a long period, various natural questions about topologically transcendental extensions of analytic fields were not touched. The primary goal of this paper is to fill in this gap in foundations. The original motivation for the author was the problem of introducing types of points on Berkovich spaces. A final push for writing the paper was a question on classification of endomorphisms of $\wh{\bfF_p((t))^a}$.

\subsubsection{Untilts of $\wh{\bfF_p((t))^a}$}
Fargues and Fontaine have recently defined a ``complete curve" $X$ which sheds a new light on p-adic Hodge theory. The residue fields of $X$ are parameterized by analytic fields $K$ of mixed characteristic such that their tilt $K^\flat$ is isomorphic to $\wh{\bfF_p((t))^a}$. It was then a natural question whether one necessarily has that $\bfC_p\toisom K$, see \cite[Remark~2.24]{fargues-fontaine-durham}. In fact, several experts expected the answer to be positive, but a counter-example was constructed in \cite{untilt}. The idea is very simple: $K$ is automatically algebraically closed, and tilting the canonical embedding $\phi\:\bfC_p\into K$ one obtains an endomorphism $\phi^\flat$ of $\wh{\bfF_p((t))^a}$. Conversely, via the tilting correspondence one can untilt any such endomorphism $\phi_1^\flat$ to an embedding $\phi_1\:\bfC_p\into K$, hence the above question reduces to the question whether any endomorphism $\phi_1^\flat$ is invertible. Thus, the theorem of Matignon-Reversat answers both questions negatively.

The above result indicates that, probably, one should give up with classifying points on $X$. Nevertheless, we prove in Theorem~\ref{subfieldsth} that all analytic algebraically closed subfields of $\wh{\bfF_p((t))^a}$ are totally ordered by inclusion. In particular, even if a full classification is difficult or impossible, it seems plausible that there might be valuable numerical invariants of non-trivial extensions $\wh{\bfF_p((x))^a}/\wh{\bfF_p((t))^a}$.

The theorem about subfields is equivalent to the claim that any pair of elements in $\wh{\bfF_p((t))^a}$ is topologically algebraically dependent. Such result lays the ground for a basic theory of extensions of topological transcendence degree 2. In this paper, we prove an analogous result (see Theorem~\ref{mainineq}) for arbitrary extensions.

\subsubsection{Multitype of points on Berkovich spaces}\label{multisec}
Berkovich divides points on $k$-analytic curves into 4 types, see \cite[\S1.4.4]{berbook}. Let us denote it as a quadruple $(n_1,n_2,n_3,n_4)$ of three zeros and a unit. Given a point $x$ on a $d$-dimensional space $X_d$, one is tempted to give the following recursive definition: locally at $x$ find a map $f\:X_d\to X_{d-1}$ such that $\dim(X_{d-1})=d-1$ and the fiber near $x$ is a curve, and define the naive multitype of $x$ in $X$ to be the naive multitype of $f(x)$ plus the type of $x$ in the fiber. Question: is this well-defined?

It is easy to see that the entries $n_2$ and $n_3$ are well-defined, and in fact
$$n_2(x)=\trdeg(\wHx/\tilk),\ \ n_3(x)=\dim_\bfQ((|\calH(x)^\times|/|k^\times|)\otimes\bfQ).$$ However, if $\cha(\tilk)>0$ then the theorem of Matignon-Reversat allows to construct points for which $n_1$ and $n_4$ depend on choices, see \S\ref{multitype}. On the positive side, the theory of topological transcendence degree works perfectly well when $\cha(\tilk)=0$, and it follows that in this case the naive multitype is well-defined and, in fact, $n_2+n_3+n_4=\topdeg(\calH(x)/k)$. This leads to the correct definition:

\begin{defin}\label{multirem}
The multitype of $x$ consists of four entries with $n_2(x)$ and $n_3(x)$ defined above and
$$n_1(x)=d-\topdeg(\calH(x)/k), \ \ \ n_4(x)=\topdeg(\calH(x)/k)-n_2(x)-n_3(x).$$
\end{defin}

We will prove in Theorem~\ref{multith} that $n_4$ is the minimal number of type 4 points in the fibers one can obtain for a sequence $X_d\to X_{d-1}\to X_{d-2}\to\dots$.

\subsection{The results and an overview of the paper}

\subsubsection{The main inequality}
In \S\ref{topgebraicsec}, we introduce a basic terminology. In particular, we define the {\em topgebraic} (i.e. topologically algebraic) independence of a set and define the topological transcendence degree $\topdeg(K/k)$ to be the minimal bound on the cardinality of an independent set. Also, we define $\Topdeg(K/k)$ to be the minimal cardinality of a topgebraically generating set. In \S\ref{mainsec}, we use a simple perturbation argument to prove that $\topdeg(K/k)\le\Topdeg(K/k)$, see Theorem~\ref{mainineq}. This is the necessary minimum for the notion of topological transcendence degree to make any practical sense, and this is the only not completely trivial result that holds true for arbitrary extensions of analytic fields. If $\Topdeg(K/k)<\infty$ then both cardinals are equal by Theorem~\ref{basisth}, but they may differ in general.

\subsubsection{Completed differentials}
In \S\ref{diffsec}, we introduce the strongest technical tool we use to study transcendental extensions: the completed module of differentials. In the classical theory of transcendental field extensions the module $\Omega_{K/k}$ completely controls transcendental properties of the extension when $\cha(k)=0$. For example, $S\subset K$ is algebraically independent over $k$ (resp. is a transcendence basis) if and only if $d_{K/k}(S)$ is linearly independent over $K$ (resp. is a basis), and hence $\trdeg(K/k)=\dim_K(\Omega_{K/k})$. This fails in positive characteristic since given a tower $K/l/k$ for which $K/l$ is inseparable, the map $\psi_{K/l/k}\:\Omega_{l/k}\otimes_lK\to\Omega_{K/k}$ is not injective. For example, $d_{K/k}$ vanishes on $K^p$.

The situation in the analytic case is analogous. We prove in \S\ref{char0sec} that if the residue characteristic of $k$ is zero then the maps $\hatpsi_{K/l/k}\:\hatOmega_{l/k}\wtimes_lK\to\hatOmega_{K/k}$ are always injective and $\topdeg(K/k)$ equals the topological dimension of $\hatOmega_{K/k}$ (see \S\ref{topdimsec}). Taking into account that $\hatOmega_{K/k}$ is a Banach space rather than an abstract vector space, the theory is as nice as possible.

We know by the theorem of Matignon-Reversat that a similar theory does not work properly when $p=\cha(\tilk)>0$, but completed differentials allow to pin down the main source of the problem: the map $\hatpsi_{K/l/k}$ does not have to be injective. Furthermore, we prove in Lemma~\ref{noninjlem} that if this happens then either $K/l$ is not separable, as in the classical case of field extensions, or $\Omega_{\Kcirc/\lcirc}$ contains a non-trivial infinitely divisible torsion element. In particular, $K/l$ has zero different and is ``very wildly ramified". This case can be interpreted as a topological analogue of inseparability.

\subsubsection{Pathologies}
Construction of various pathological counter-examples is postponed until the last section. This is done because we use a little bit of material about perturbations and completed differentials, but this section is relatively self-contained. Therefore, we advise the reader to look through \S\ref{pathologysec} directly after reading \S\ref{topgebraicsec} and before reading the rest of the paper.

Roughly speaking, there are three main pathologies dealt with in the three subsections of \S\ref{pathologysec}. First, a primitive extension $\wh{k(t)}$ of $k$ may contain an infinite algebraic extension $l/k$. For our applications the main case of interest is when $t\notin\whka$ and $l/k$ has zero different, but for the sake of completeness we study other cases in detail too, see \S\ref{primsec}.

The main pathology is constructed in Theorem~\ref{mainpathology}: a tower of topologically transcendental extensions $L=\wh{k(x,y)}/K=\wh{k(x)}/k$ such that $x\in\wh{k(y)^a}$. In particular, the topological transcendence degree of all three extensions $L/K$, $L/k$ and $K/k$ is one. The main idea of the construction of $L/K/k$ is to achieve that $\whd_{K/k}(x)$ dies in $\hatOmega_{L/k}$, and in spirit of Lemma~\ref{noninjlem}, the main thing one should achieve is that $L/K$ has zero different. This is achieved by a direct computation in \cite{untilt}, while in the current paper we use Theorem~\ref{deepramlem} instead. We use Theorem~\ref{mainpathology} to provide a new short proof of the theorem of Matignon-Reversat, but it is also easy to deduce our Theorem~\ref{mainpathology} from the theorem of Matignon-Reversat, see Theorem~\ref{endth} and Remark~\ref{endrem2}.

Finally, it may happen that $\topdeg(K/k)<\Topdeg(K/k)$. In such case one always has that $\Topdeg(K/k)=\infty$, and such large extensions rarely arise in applications (with the main exception being extensions $K/k$ with a spherically complete $K$). Nevertheless, we think that it is important to know about their existence, and using the theorem of Matignon-Reversat it is easy to construct such an extension even with $\topdeg(K/k)=1$, see Theorem~\ref{nobasisth}. Finally, we show in Theorem~\ref{sphcor} that infinite extensions $K/k$ with an algebraically closed and spherically complete $K$ are also very large in the following sense: at least if $\cha(\tilk)>0$ then $K/k$ does not admit a topological transcendence basis.

\section{Topgebraic dependence}\label{topgebraicsec}

\subsection{Terminology}

\subsubsection{Real-valued and analytic fields}
By a {\em real-valued field} we always mean a field $K$ provided with a non-archimedean real valuation $|\ |\:K\to\bfR_+$. Then $\Kcirc$, $\Kcirccirc$ and $\tilK$ denote the ring of integers of $K$, the maximal ideal of $\Kcirc$, and the residue field of $K$, respectively. For any extension or embedding of real-valued fields we always assume that the valuations agree. We will mainly work with {\em analytic fields}, which are complete real-valued fields.

\subsubsection{Extension of valuation}
Any analytic field $K$ is henselian. Thus, if $L$ is an algebraic extension of $K$ then the valuation of $K$ extends to $L$ uniquely and we denote the corresponding completion by $\hatL$.

\subsubsection{Topological generation}
Let $K/k$ be an extension of analytic fields and $S\subseteq K$ a subset. Then the closure $l$ of $k(S)$ in $K$ is an analytic field which is also the completion of $k(S)$ provided with the valuation induced from $K$. So, we will use the notation $l=\wh{k(S)}$ and say that $l$ is {\em topologically generated} by $S$ over $k$. We say, that $K$ is {\em topologically finitely generated} over $k$ if $K=\wh{k(S)}$ for a finite set $S$.

\subsubsection{Topgebraic extensions}
In this paper the combination ``topologically algebraic" appears very often so we abbreviate it as {\em topgebraic}. In particular, a completed algebraic closure $\whKa$ will be called a {\em topgebraic closure} of $K$. More generally, an extension of analytic fields $L/K$ is called {\em topgebraic} if $\wh{K^aL}=\whKa$. This happens if and only if $L/K$ is an analytic subextension of $\whKa/K$. In general, we say that an element $t\in K$ is {\em topgebraic} over $k$ if so is the extension $\wh{k(t)}/k$, and we call $t$ {\em topologically transcendental} over $k$ otherwise.

\subsubsection{Primitive extensions}
We say that $K/k$ is {\em primitive} if $K=\wh{k(t)}$ for a single element $t$. By Abhyankar inequality, $E_{k(t)/k}+F_{k(t)/k}\le 1$, where $F_{k(t)/k}=\trdeg(\wt{k(t)}/\tilk)$ and $E_{K/k}=\dim_\bfQ(|K^\times|/|k^\times|\otimes\bfQ)$. Since these invariants do not change under completion we also have that $E_{k(t)/k}+F_{k(t)/k}\le 1$. Note that $K/k$ is primitive if and only if $K=\calH(x)$ for a point $x$ on Berkovich affine line over $k$, and we classify primitive extensions similarly to the classification of points on $\bfA^1_k$ in \cite[\S1.4.4]{berbook}:
\begin{itemize}
\item[(1)] Type 1 if $K/k$ is topgebraic.
\item[(2)] Type 2 if $F_{K/k}=1$. (Happens if and only if $\tilK/\tilk$ is transcendental.)
\item[(3)] Type 3 if $E_{K/k}=1$. (Happens if and only if $|K^\times|/|k^\times|$ is not torsion.)
\item[(4)] Type 4 otherwise.
\end{itemize}

\subsubsection{Topological transcendence degree}
A set $S\subset L$ is {\em topgebraically independent} over $K$ if any element $x\in S$ is topologically transcendental over $\wh{K(S\setminus\{x\})}$. The {\em topological transcendence degree} of $L$ over $K$ is the minimal cardinal $\Lam$ such that the size of any topgebraically independent set does not exceed $\Lam$. We use the notation $\topdeg(L/K)=\Lam$.

\begin{rem}
We do not study the question wether maximal topgebraically independent sets exist and are of cardinality $\topdeg(L/K)$. The problem with existence is that the union of an increasing sequence of topgebraically independent sets does not have to be topgebraically independent.
\end{rem}

\subsubsection{Topological transcendence basis}
We say that $S\subset L$ {\em topgebraically generates} $L$ over $K$ if $L$ is topgebraic over $\wh{K(S)}$. If, in addition, $S$ is topgebraically independent then we say that $S$ is a {\em topological transcendence basis} of $L$ over $K$.

\subsubsection{Topgebraic generating degree}
The set of cardinalities of all topgebraically generating subsets of $K$ has a minimal element that we call the {\em topgebraic generating degree} and denote $\Topdeg(K/k)$.

\begin{rem}\label{topgenrem}
(i) If $\Topdeg(K/k)<\infty$ then $K/k$ possesses a topological transcendence basis. Indeed, take any finite topgebraically generating set $S$ and remove elements until $S$ becomes minimal. Then $S$ also becomes topgebraically independent.

(ii) We stress that even for $\wh{k(t)^a}/k$ there might exist maximal topgebraically independent sets which are not topgebraically generating, see Remark~\ref{endrem2}(i)
.

(iii) One of various pathologies with very large extensions is that minimal topgebraically generating sets do not have to exist in general. In particular, a topological transcendence basis does not have to exist. For example, we will construct in Theorem~\ref{nobasisth} an extension with $\topdeg(K/k)=1$ and $\Topdeg(K/k)=\infty$.
\end{rem}

%\subsubsection{Extensions of finite topological type}
%On the other side, we say, that $K$ is of {\em finite topological type} over $k$, if there exists a finite topologically independent set $S\subset K$, such that $[K:\wh{k(S)}]<\infty$. In particular, $S$ is a topological transcendence basis. Any such set $S$ is called {\em topological transcendence basis of finite type} of $K$ over $k$. We say, that topological transcendence basis is {\em unramified} if the extension $K/\wh{k(S)}$ is unramified.

\subsection{Simple properties}

\subsubsection{Monotonicity}
We say that a set-theoretic invariant $\phi$ of field extensions is {\em monotonic} if $\phi(L'/K')\le\phi(L/K)$ for any tower $L/L'/K'/K$.

\begin{lem}\label{monotonlem}
The topological transcendence degree is a monotonic invariant.
\end{lem}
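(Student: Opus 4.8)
The plan is to show that if $L/L'/K'/K$ is a tower of analytic fields, then any topgebraically independent subset $S\subseteq L'$ over $K'$ is also topgebraically independent over $K$; this immediately gives $\topdeg(L'/K')\le\topdeg(L'/K)$. Combining this with the (easier) claim that enlarging the top field does not decrease the degree, i.e. $\topdeg(L'/K)\le\topdeg(L/K)$, yields the full monotonicity statement $\topdeg(L'/K')\le\topdeg(L/K)$.

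For the first claim, suppose $S\subseteq L'$ is topgebraically independent over $K'$, and let $x\in S$. By definition we must check that $x$ is topologically transcendental over $M:=\wh{K(S\setminus\{x\})}$. Assume to the contrary that $x$ is topgebraic over $M$. Let $M':=\wh{K'(S\setminus\{x\})}$, computed inside $L'$. Since $K\subseteq K'$ we have $M\subseteq M'$, so $M'/M$ is an extension and $x$ topgebraic over $M$ implies $x$ is topgebraic over $M'$ — here I use that the topgebraic closure behaves well under enlarging the base field, namely that an element lying in $\wh{M^a}$ also lies in $\wh{M'^a}$, because $M^a\subseteq M'^a$ and hence $\wh{M^a}$ embeds isometrically into $\wh{M'^a}$. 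But $x$ topgebraic over $M'=\wh{K'(S\setminus\{x\})}$ contradicts the topgebraic independence of $S$ over $K'$. Hence $x$ is topologically transcendental over $M$, as desired.

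For the second claim, it suffices to observe that a subset $S\subseteq L'$ which is topgebraically independent over $K$ remains topgebraically independent when viewed inside the larger field $L\supseteq L'$, since the notion of $x\in S$ being topologically transcendental over $\wh{K(S\setminus\{x\})}$ refers only to the extension $\wh{K(S)}/\wh{K(S\setminus\{x\})}$, which is the same whether computed in $L'$ or in $L$. Thus every topgebraically independent set over $K'$ inside $L'$ gives rise to a topgebraically independent set of the same cardinality over $K$ inside $L$, and taking the minimal bound over all such sets gives $\topdeg(L'/K')\le\topdeg(L/K)$.

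The only genuinely substantive point is the stability statement used in the first claim: that topgebraicity of a single element $x$ over $M$ is preserved under replacing $M$ by a larger analytic field $M'$ containing it. This follows from the definition of topgebraic extension in \S\ref{topgebraicsec}: $\wh{M^a}$ is the topgebraic closure of $M$, the algebraic closure $M^a$ can be chosen inside $M'^a$, and the valuation on $M'^a$ restricts to the unique valuation on $M^a$, so the completion $\wh{M^a}$ sits isometrically inside $\wh{M'^a}$; hence $x\in\wh{M^a}$ forces $x\in\wh{M'^a}$, i.e. $\wh{M'(x)}\subseteq\wh{M'^a}$ is an analytic subextension of $\wh{M'^a}/M'$, which is exactly topgebraicity of $x$ over $M'$. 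I expect this to be the main (though still routine) obstacle, since it is where one must be careful that completions and algebraic closures interact correctly; everything else is bookkeeping with the definitions.
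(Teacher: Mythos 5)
Your proposal is correct and follows essentially the same route as the paper's one-line proof, which simply asserts that a set $S\subset L'$ topgebraically independent over $K'$ is also topgebraically independent over $K$; you have just unfolded that one-liner into the two observations it implicitly contains (topgebraicity is preserved under enlarging the base field, and the notion is intrinsic to the extension, hence unaffected by enlarging the ambient field $L'$ to $L$).
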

\begin{proof}
If a set $S\subset L'$ is topgebraically independent over $K'$ then it is also topgebraically independent over $K$.
\end{proof}

\begin{rem}
It is not clear if $\Topdeg$ is monotonic. Questions~\ref{topdeg1q} and \ref{subfieldsq} might be related to this problem, especially in order to construct non-monotonic examples (if exist).
\end{rem}

\subsubsection{Subadditivity}
We say that an invariant $\phi$ of field extensions is {\em subadditive} if $\phi(F/K)\le\phi(F/L)+\phi(L/K)$ for any tower $F/L/K$.

\begin{lem}\label{subadditivelem}
The topological generating degree is a subadditive invariant.
\end{lem}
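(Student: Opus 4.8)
The plan is to mimic the proof of subadditivity of the ordinary transcendence degree, but with the role of the transcendence basis replaced by a topgebraically generating set. Suppose $F/L/K$ is a tower of analytic fields. If either $\Topdeg(F/L)$ or $\Topdeg(L/K)$ is infinite there is nothing to prove, so assume both are finite cardinals; write them as $m=\Topdeg(F/L)$ and $n=\Topdeg(L/K)$. Choose a topgebraically generating set $S\subseteq F$ for $F/L$ with $|S|=m$, and a topgebraically generating set $T\subseteq L$ for $L/K$ with $|T|=n$. The claim will be that $S\cup T$ topgebraically generates $F$ over $K$, which gives $\Topdeg(F/K)\le m+n=\Topdeg(F/L)+\Topdeg(L/K)$.

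To verify that $S\cup T$ works, first observe that $\wh{K(T)}\subseteq L$ and $L/\wh{K(T)}$ is topgebraic by the choice of $T$. Then $L(S)\subseteq \wh{K(S\cup T)}{}^a$-ish reasoning is needed, so more carefully: the subfield $\wh{K(S\cup T)}$ contains $\wh{K(T)}$, and since topgebraic extensions are exactly analytic subextensions of the topgebraic closure (as recalled in the excerpt), the extension $\wh{\wh{K(T)}(S)}\big/\wh{K(T)}$ has the property that adjoining $S$ to the larger base $L\supseteq \wh{K(T)}$ still topgebraically generates: indeed $\wh{L(S)}/L$ is... no, wait — $L/\wh{K(T)}$ is topgebraic, hence $\wh{K(T)}^a = \wh{L^a}$ after completion, i.e. $\wh{\wh{K(T)}{}^a} = \wh{L^a}$. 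The key step is then to show $\wh{L(S)}$ is topgebraic over $\wh{K(S\cup T)} \supseteq \wh{\wh{K(T)}(S)}$, for which one uses that topgebraicity is preserved under "base change": if $l'/l$ is topgebraic and $m/l$ is any extension inside a common field, then $\wh{m l'}/m$ is topgebraic. Applying this with $l=\wh{K(T)}$, $l'=L$, $m=\wh{\wh{K(T)}(S)}$ shows $\wh{L(S)}/\wh{\wh{K(T)}(S)}$ is topgebraic, hence $\wh{L(S)}/\wh{K(S\cup T)}$ is topgebraic. Finally $F/\wh{L(S)}$ is topgebraic by the choice of $S$, and composing the two topgebraic extensions $F/\wh{L(S)}$ and $\wh{L(S)}/\wh{K(S\cup T)}$ (transitivity of topgebraicity) yields that $F$ is topgebraic over $\wh{K(S\cup T)}$, as desired.

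The main obstacle, and the only point requiring genuine care, is the two auxiliary facts about topgebraic extensions invoked above: (a) transitivity — if $F/M$ and $M/N$ are topgebraic then $F/N$ is topgebraic; and (b) base change — if $l'/l$ is topgebraic and $m\supseteq l$ then $\wh{m l'}/m$ is topgebraic. Both follow readily from the characterization "$L/K$ is topgebraic iff $\wh{K^a L}=\wh{K^a}$" given in \S\ref{topgebraicsec}: for (a), $\wh{N^a F} = \wh{N^a M^a F} = \wh{N^a \wh{M^a}} = \wh{N^a M^a}\,$-completion $= \wh{N^a}$; for (b), $\wh{m^a (m l')} = \wh{m^a l'} \subseteq \wh{m^a l^a l'} = \wh{m^a \wh{l^a}} = \wh{m^a}$, using $\wh{l^a l'} = \wh{l^a}$. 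These are elementary but should be spelled out, since the whole argument rests on them. Everything else is bookkeeping with closures of subfields, exactly parallel to the classical proof that transcendence degree is additive in towers (here only the inequality, since $\Topdeg$ need not be additive).
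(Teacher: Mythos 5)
Your proof is correct and takes the same route as the paper: both reduce to the claim that if $S$ topgebraically generates $L/K$ and $T$ topgebraically generates $F/L$ then $S\cup T$ topgebraically generates $F/K$. The paper states this as a one-line observation, while you spell out the verification via transitivity and base-change of topgebraicity (which follow from $L/K$ topgebraic iff $L\subseteq\wh{K^a}$); your write-up meanders a bit in the middle paragraph but the final argument is sound and is exactly the content implicit in the paper's terse proof.
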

\begin{proof}
If $S$ generates $L/K$ and $T$ generates $F/L$ then $S\cup T$ generates $F/K$.
\end{proof}

\begin{question}
Is $\topdeg$ subadditive?
\end{question}

\section{The main inequality}\label{mainsec}
The aim of this section is to show that $\topdeg(K/k)\le\Topdeg(K/k)$.

\subsection{Perturbations}

\subsubsection{Radius}
Let $L/K$ be an extension of analytic fields and let $t\in L$ be an element. By the {\em $K$-radius} of $t$ we mean the number $r_K(t)=\inf_{c\in K^a}|t-c|$, where the absolute value is computed in $\whLa$. In particular, $r_K(t)=0$ if and only if $t$ is topgebraic over $K$.

\begin{rem}
The geometric meaning of the radius is that $t$ induces a morphism $\calM(L)\to\bfA^1_K$ whose image is a point $x_t\in\bfA^1_K$ and $r_K(t)$ is the radius of $x_t$.
\end{rem}

\subsubsection{Perturbations of fields}
Let $\phi,\psi\:K\into L$ be two embeddings of analytic fields. We say that $\psi$ is  a {\em quasi-perturbation} of $\phi$ if $|\phi(x)-\psi(x)|<|x|$ for any $x\in K$. Furthermore, we say that $\psi$ is a {\em perturbation} of $\phi$ if this inequality holds uniformly in the following sense: there exists a number $\alp<1$ such that
$|\phi(x)-\psi(x)|<\alp|x|$ for any $x\in K$. To specify $\alp$, we will also say that $\psi$ is an {\em $\alp$-perturbation} of $\phi$.

\begin{rem}\label{deformrem}
(i) Perhaps the main property of perturbations is the following simple result: $[L:\phi(K)]=[L:\psi(K)]$, whenever $\phi$ is a perturbation of $\psi$, see \cite[Lemma~6.3.3]{temst}. In particular, if an endomorphism $\phi\:K\to K$ is a perturbation of the identity then $\phi$ is an automorphism.

(ii) The notion of quasi-perturbations is much less useful. In particular, \cite[Question~1.1]{untilt} asks when an extension of analytic fields $K/k$ admits non-invertible quasi-perturbations of $\Id_K$ trivial on $k$. One such example is given in \cite[Example~3.2]{untilt}; it uses $K=\wh{k(t_1,t_2,\dots)}$ and applies to any $k$ with a non-discrete group of values. A more surprising example is when $K=\wh{k(x)^a}$ and $\cha(\tilk)>0$. It exists by a theorem of Matignon-Reversat, see \cite[Th\`eor\'eme~2]{matignon-reversat}, and see also \cite[Theorem~1.2]{untilt} for a shorter construction. We will return to this topic in \S\ref{endsec}.
\end{rem}

\subsubsection{Primitive extensions}
Primitive extensions of type 1 are much more pathological than the topologically transcendental ones. The main reason for this is that they are ``rigid", as opposed to the non-topgebraic extensions that admit a lot of small perturbations by \cite[Lemma~6.3.2]{temst}. The latter lemma will be heavily used below, so we recall it for convenience of the reader.

\begin{lem}\label{primdeform}
Assume that $L/K$ is an extension of analytic fields and $t,t'\in L$ are elements such that $|t-t'|<r_K(t)$. Then there is a unique isomorphism of analytic fields $\phi\:\wh{k(t)}\toisom\wh{k(t')}$ such that $\phi(t)=t'$. In addition, $\phi$ is an $\alp$-perturbation of $\Id_{\wh{k(t)}}$, where $|t-t'|/r_K(t)<\alp<1$.
\end{lem}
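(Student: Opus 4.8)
The plan is to construct $\phi$ by hand on the dense subfield $k(t)$ and then pass to completions. First I would record the consequences of the hypothesis: since $r_K(t)>0$, the element $t$ is topologically transcendental, hence transcendental, over $K$ and a fortiori over $k$; and $r_K(t')=r_K(t)$, because for every $c\in K^a$ we have $|t-c|\ge r_K(t)>|t-t'|$, so $|t'-c|=|t-c|$ by the ultrametric inequality, and taking infima yields $r_K(t')\ge r_K(t)$; applying the same argument with the roles of $t$ and $t'$ reversed (legitimate since $|t-t'|<r_K(t)\le r_K(t')$) gives $r_K(t)\ge r_K(t')$, whence equality. In particular $t'$ too is transcendental over $k$, so there is a unique abstract $k$-algebra isomorphism $k(t)\toisom k(t')$ with $t\mapsto t'$, and I set $\beta:=|t-t'|/r_K(t)<1$.

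The core step is the estimate $|f(t)-f(t')|\le\beta\,|f(t)|$ for every $f\in k[T]$. I would factor $f(T)=c\prod_{i=1}^n(T-a_i)$ with $a_i\in k^a\subseteq K^a$, put $u_i=t-a_i$ and $v_i=t'-a_i$, and telescope $\prod_iu_i-\prod_iv_i$ as $(t-t')$ times a sum of products each involving all indices except one; since $|u_i-v_i|=|t-t'|<r_K(t)\le|u_i|$ one has $|v_i|=|u_i|$, so each such product has absolute value at most $\prod_i|u_i|/r_K(t)$, and multiplying through by $|c|\,|t-t'|$ gives the bound. Interchanging $t$ and $t'$ (and using $r_K(t')=r_K(t)$) gives $|f(t)-f(t')|\le\beta\,|f(t')|$ as well; combining the two, if $|f(t)|$ and $|f(t')|$ were different then their maximum would by the ultrametric inequality equal $|f(t)-f(t')|$, contradicting the bound by $\beta$ times their minimum. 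Hence $|f(t)|=|f(t')|$ for all $f\in k[T]$.

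With this in hand the rest is formal. Applying $|f(t)|=|f(t')|$ to the numerator and denominator of a rational function shows that the abstract isomorphism $k(t)\toisom k(t')$ is an isometry for the valuations induced from $L$, so it extends uniquely to an isometric isomorphism $\phi\:\wh{k(t)}\toisom\wh{k(t')}$; uniqueness of $\phi$ among continuous $k$-homomorphisms carrying $t$ to $t'$ holds because $k(t)$ is dense in $\wh{k(t)}$. For the perturbation statement I would verify $|\phi(x)-x|\le\beta\,|x|$ first for $x=f(t)$ (this is exactly the estimate above) and then for $x=f(t)/g(t)$, using the identity $f(t')g(t)-f(t)g(t')=f(t')\bigl(g(t)-g(t')\bigr)+\bigl(f(t')-f(t)\bigr)g(t')$ together with $|f(t')|=|f(t)|$ and $|g(t')|=|g(t)|$; by density the inequality then holds on all of $\wh{k(t)}$, and since $\beta<\alp<1$ this exhibits $\phi$ as an $\alp$-perturbation of $\Id_{\wh{k(t)}}$.

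The only nontrivial point is the multiplicative estimate $|f(t)-f(t')|\le\beta\,|f(t)|$, and it is precisely there that the defining property $|t-c|\ge r_K(t)$ of the $K$-radius is used, via the factorization of $f$ over $K^a$; everything else is routine ultrametric and density bookkeeping.
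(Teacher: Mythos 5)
The paper does not prove this lemma at all; it simply recalls it as \cite[Lemma~6.3.2]{temst} for the reader's convenience. Your proposal is therefore being compared against a citation, not against a proof, and I will judge it on its own merits.

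Your argument is correct and self-contained. The key estimate $|f(t)-f(t')|\le\beta|f(t)|$ for $f\in k[T]$, obtained by factoring $f$ over $k^a$ and telescoping $\prod u_i-\prod v_i$, is exactly the right reduction: each factor satisfies $|u_i|\ge r_K(t)$ because $a_i\in K^a$, each $|v_i|=|u_i|$ by the isosceles triangle, and the telescope loses precisely one factor of size at least $r_K(t)$, which produces the ratio $\beta=|t-t'|/r_K(t)$. Interchanging $t$ and $t'$ (legitimate once $r_K(t')=r_K(t)$ is established) forces $|f(t)|=|f(t')|$, so the abstract $k$-isomorphism $k(t)\toisom k(t')$ is an isometry, extends to completions, and is unique among continuous maps by density. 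The passage to rational functions via $f(t')g(t)-f(t)g(t')=f(t')(g(t)-g(t'))+(f(t')-f(t))g(t')$ and the density argument at the end are routine and correctly executed; since the proved bound is $\le\beta|x|$ with $\beta<\alp$, the strict inequality required by the definition of an $\alp$-perturbation holds. One small presentational point: the lemma's statement of $\wh{k(t)}\toisom\wh{k(t')}$ versus the use of $r_K$ is a typo in the paper (in its applications, such as in the proof of Theorem~\ref{deformth}, the base field and the field in the radius coincide); your proof silently handles the more general $k\subseteq K$ situation, which is fine and in fact matches the formulation in the cited source.
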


\subsubsection{Multiradius}
Assume now that $S$ is a subset of $L$. Then we define the {\em $K$-multiradius} of $S$ to be the map $r_{K,S}\:S\to[0,\infty)$ given by $r_{K,S}(t)=r_{\wh{K(S\setminus\{t\}})}(t)$. In particular, the set $S$ is topgebraically independent over $K$ if and only if $r_{K,S}(t)>0$ for any $t\in S$.

\subsubsection{Perturbations of subsets}
Let $L/K$ be an analytic extension and $S\subset L$ a subset. By an $\alp$-perturbation of $S$ in $L$ we mean a map $\phi\:S\to L$ such that $|s-\phi(s)|<\alp r_{K,S}(s)$ for any $s\in S$. Note that if $S$ admits an $\alp$-perturbation then $r_{K,S}(t)>0$ for any $t\in S$ and hence $S$ is topgebraically independent over $K$. Furthermore, since $|t-t'|\ge r_{K,S}(t)$ for any two distinct elements $t,t'\in S$, the map $\phi$ is injective and is uniquely determined by the set $S'=\phi(S)$. For this reason, we will freely say in the sequel that $S'$ is an $\alp$-perturbation of $S$.

\begin{theor}\label{deformth}
Assume that $L/K$ is an extension of analytic fields, $\alp<1$ a number and $S\subset L$ is a subset with an $\alp$-perturbation $S'\subset L$. Then $\phi\:S\toisom S'$ uniquely extends to an isomorphism of analytic fields $\phi\:\wh{K(S)}\toisom\wh{K(S')}$. In addition, $\phi$ is an $\alp$-perturbation of $\Id_{\wh{K(S)}}$.
\end{theor}
\begin{proof}
For any $T\subseteq S$ define $\phi_T\:S\to L$ by $\phi_T(x)=\phi(x)$ if $x\in T$ and $\phi_T(x)=x$ otherwise. There is at most one extension of $\phi_T$ to a homomorphism of analytic fields $\wh{K(S)}\toisom\wh{K(\phi_T(S))}$. If exists, the latter is automatically an isomorphism, and we will also denote it by $\phi_T$. Let $C$ denote the set of subsets $T\subseteq S$ such that $\phi_T\:\wh{K(S)}\toisom\wh{K(\phi_T(S))}$ exists and is an $\alp$-perturbation of $\Id_{\wh{K(S)}}$. We should prove that $S\in C$.

First, we claim that if $T\in C$ and $x\in S\setminus T$ then $T'=T\cup\{x\}$ is also in $C$. Indeed, $\phi_{T'}=\phi_T\circ\phi_{\{x\}}$ so this follows from the fact that $\phi_T$ is an $\alp$-pertubation by the assumption and $\phi_{\{x\}}$ is an $\alp$-perturbation by Lemma~\ref{primdeform} applied to $\wh{K(S\setminus\{x\})}$ and the elements $x$ and $\phi(x)$.

Next, we claim that if $\{T_i\}$ is an ordered chain in $C$ when $T=\cup T_i$ is also in $C$. Once this is proved, the theorem will follow by Zorn's lemma. On the level of maps $S\to L$, we have that $\phi_T$ is the limit of the maps $\phi_{T_i}$. It follows that for any $x\in K(S)$ the sequence $\phi_{T_i}(x)$ stabilizes for $i$ large enough, and setting $\phi_T(x)$ to be this limit value we obtain an isomorphism of valued fields $\phi_T\:K(S)\toisom K(\phi_T(S))$, which is an $\alp$-perturbation of the identity. Completing both sides we obtain a required isomorphism of analytic fields $\phi_T\:\wh{K(S)}\toisom\wh{K(\phi_T(S))}$, and it remains to show that the latter is also an $\alp$-perturbation of the identity. Given a non-zero element $\hatx\in\wh{K(S)}$ choose $x\in K(S)$ such that $|x-\hatx|<\alp|x|$. Then $|\phi_T(\hatx)-\phi_T(x)|<\alp|x|$ since $\phi_T$ preserves the valuation, and $|x-\phi_T(x)|<\alp|x|$ since $\phi_T|_{K(S)}$ is an $\alp$-perturbation of the identity. The three inequalities imply that  $|\hatx-\phi_T(\hatx)|<\alp|x|=\alp|\hatx|$, as required.
\end{proof}

\begin{cor}
Keep the assumptions of Theorem~\ref{deformth}, then $r_{K,S'}\circ\phi=r_{K,S}$. In particular, being an $\alp$-perturbation is an equivalence relation on the set of subsets of $L$.
\end{cor}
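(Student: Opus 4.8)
The plan is to derive the corollary from the isomorphism $\phi\:\wh{K(S)}\toisom\wh{K(S')}$ of Theorem~\ref{deformth} by checking that $\phi$ transports the data entering the definition of the multiradius. First I would note that, for a fixed $t\in S$, the map $\phi$ restricts to an isometric isomorphism of analytic fields $\phi\:\wh{K(S\setminus\{t\})}\toisom\wh{K(S'\setminus\{\phi(t)\})}$: since $\phi$ fixes $K$ pointwise, is injective on $S$ with image $S'$, and preserves the valuation, it carries $K(S\setminus\{t\})$ isometrically onto $K(\phi(S)\setminus\{\phi(t)\})=K(S'\setminus\{\phi(t)\})$, and one passes to completions. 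Write $M=\wh{K(S\setminus\{t\})}$ and $M'=\phi(M)=\wh{K(S'\setminus\{\phi(t)\})}$, so that $r_{K,S}(t)=r_M(t)$ and $r_{K,S'}(\phi(t))=r_{M'}(\phi(t))$ by definition.

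Since analytic fields are henselian, the valuation of $M$ extends uniquely to its algebraic closure $M^a$, and hence any field-theoretic extension of $\phi$ to an isomorphism $\overline\phi\:M^a\toisom (M')^a$ is automatically isometric. As $\overline\phi$ sends $t$ to $\phi(t)$ and maps $M^a$ bijectively onto $(M')^a$, and as the infimum defining a radius does not depend on the chosen algebraically closed valued field containing the relevant elements (again by the henselian property, which pins down all the valuations), we obtain
$$r_M(t)=\inf_{c\in M^a}|t-c|=\inf_{c\in M^a}|\phi(t)-\overline\phi(c)|=\inf_{c'\in (M')^a}|\phi(t)-c'|=r_{M'}(\phi(t)),$$
that is, $r_{K,S'}(\phi(t))=r_{K,S}(t)$ for all $t\in S$, which is the asserted identity $r_{K,S'}\circ\phi=r_{K,S}$. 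I expect this normalization of radii, together with the small amount of bookkeeping about ambient algebraically closed fields, to be the only point requiring any care; the rest is formal.

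It remains to check that ``$S'$ is an $\alp$-perturbation of $S$'' is an equivalence relation, reflexivity being immediate (if $S$ is topgebraically independent then $r_{K,S}(s)>0$ and $0<\alp<1$, so the identity map is an $\alp$-perturbation of $S$). For symmetry I would feed the identity $r_{K,S}=r_{K,S'}\circ\phi$ into the defining inequality: if $|s-\phi(s)|<\alp\, r_{K,S}(s)$ for all $s\in S$, then $|s-\phi(s)|<\alp\, r_{K,S'}(\phi(s))$ for all $s$, so $\phi^{-1}\:S'\to S$ exhibits $S$ as an $\alp$-perturbation of $S'$. For transitivity, given $\alp$-perturbations $\phi\:S\to S'$ and $\phi'\:S'\to S''$, for $s\in S$ we have $|s-\phi(s)|<\alp\, r_{K,S}(s)$ and $|\phi(s)-\phi'(\phi(s))|<\alp\, r_{K,S'}(\phi(s))=\alp\, r_{K,S}(s)$, whence $|s-\phi'(\phi(s))|\le\max\bigl(|s-\phi(s)|,\,|\phi(s)-\phi'(\phi(s))|\bigr)<\alp\, r_{K,S}(s)$, so $\phi'\circ\phi$ is an $\alp$-perturbation of $S$.
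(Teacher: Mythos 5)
The paper states this corollary without proof, so there is nothing to compare against; your argument supplies the reasoning that the author evidently regards as immediate, and it is correct in substance. The one small imprecision is in the handling of $\overline\phi$: you set $\overline\phi\:M^a\toisom(M')^a$ with $M=\wh{K(S\setminus\{t\})}$, and then write that ``$\overline\phi$ sends $t$ to $\phi(t)$'', yet $t$ is in general \emph{not} in $M^a$ (indeed, precisely when $r_M(t)>0$, i.e.\ in the only case where there is something to prove, $t$ is not topgebraic over $M$). The clean way to organize this is to extend the full isomorphism $\phi\:\wh{K(S)}\toisom\wh{K(S')}$ to an isomorphism $\Phi$ between the algebraic closures of these fields taken inside $\wh{L^a}$; by uniqueness of the valuation extension (henselianity), $\Phi$ is automatically isometric, it satisfies $\Phi(t)=\phi(t)$ since $t\in\wh{K(S)}$, and it carries $M^a$ bijectively onto $(M')^a$ since $\Phi|_M=\phi|_M$. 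Then $\Phi$ plays the role of both $\phi$ and your $\overline\phi$ in the displayed chain of equalities
$$r_M(t)=\inf_{c\in M^a}|t-c|=\inf_{c\in M^a}|\Phi(t)-\Phi(c)|=\inf_{c'\in(M')^a}|\phi(t)-c'|=r_{M'}(\phi(t)),$$
and the argument goes through. The derivation of symmetry and transitivity from $r_{K,S'}\circ\phi=r_{K,S}$ via the ultrametric inequality is exactly right, and you are also right to flag that reflexivity only holds on topgebraically independent subsets (a set that is not topgebraically independent admits no $\alp$-perturbation at all, so the relation is symmetric and transitive on all subsets, and an equivalence relation on the topgebraically independent ones).
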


\subsection{The inequality}
Using perturbations we can now compare topgebraically independent and topgebraically generating subsets.

\begin{theor}\label{mainineq}
Let $L/K$ be an extension of analytic fields. If $S\subset L$ is topgebraically independent over $K$ and $T\subseteq L$ is topgebraically generating then $|S|\le|T|$. In particular, $\topdeg(L/K)\le\Topdeg(L/K)$.
\end{theor}
\begin{proof}
Replacing $L$ by $\wh{L^a}$ we can assume that $L=\wh{K(T)^a}$. Then $K(T)^a$ is dense in $L$, and so for any $\alp\in(0,1)$ there exists an $\alp$-perturbation $S'$ of $S$ such that $S'\subset K(T)^a$. Since $S'$ is topgebraically independent over $K$, it is also algebraically independent and hence $|S|=|S'|\le |T|$.
\end{proof}

\subsubsection{Finite topgebraic generation}
If $\Topdeg(K/k)<\infty$ one can say much more.

\begin{theor}\label{basisth}
If $L/K$ is of finite topgebraic degree then $\topdeg(L/K)=\Topdeg(L/K)$, $L$ possesses a topological transcendence basis, and the cardinality of any such base is $\topdeg(L/K)$.
\end{theor}
\begin{proof}
By Remark~\ref{topgenrem}(i), $L/K$ possesses a topological transcendence basis $S$. Therefore, $\topdeg(L/K)\ge\Topdeg(L/K)$ and in view of Theorem~\ref{mainineq} we obtain an actual equality. In addition, Theorem~\ref{mainineq} implies that all such bases are of the same cardinality.
\end{proof}

\section{Completed differentials}\label{diffsec}
In this section, we will use completed modules of differentials to study topological transcendence degree.

\subsection{Basic facts}
Basic facts about differentials of valuation rings can be found in \cite[Chapter~6]{Gabber-Ramero} and \cite[Section~4]{Temkintopforms}. The latter also studies completed modules of differentials. First, we briefly recall what will be needed.

\subsubsection{Differentials}
We will use the notation $d_{B/A}\:B\to\Omega_{B/A}$ and $d_B\:B\to\Omega_B$ to denote the differentials.

\subsubsection{Completed differentials}
For any homomorphism $A\to K$ with $K$ a real-valued field, the module $\Omega_{K/A}$ possesses a natural seminorm called K\"ahler seminorm, and the completion is denoted $\whOmega_{K/A}$, see \cite[\S4.1.1 and \S4.3.1]{Temkintopforms}. We will denote the differential by $\whd_{K/A}\:K\to\whOmega_{K/A}$. Of a special importance will be the case when $\whd_{L/K}(t)\neq 0$ but $\whd_{F/K}(t)=0$ for a tower of analytic fields $F/L/K$.

\subsubsection{The first fundamental sequence}
As in \cite[Section~5]{Temkintopforms}, the first fundamental sequence plays a critical role in studying completed differentials.

\begin{lem}\label{firstseq}
Let $F/L/K$ be a tower of analytic fields, then there is a semi-exact sequence $$\whOmega_{L/K}\wtimes_LF\stackrel{\hatpsi_{F/L/K}}{\longrightarrow}\whOmega_{F/K}\to\whOmega_{F/L}\to 0.$$
\end{lem}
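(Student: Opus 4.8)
The plan is to deduce the completed (semi-exact) sequence from the classical first fundamental sequence by passing to completions and tracking seminorms, rather than re-proving exactness from scratch. Recall that for the tower of fields $F/L/K$ one has the classical right-exact first fundamental sequence $\Omega_{L/K}\otimes_LF\to\Omega_{F/K}\to\Omega_{F/L}\to 0$, and the cokernel and the two maps are all explicit on generators: $d_{L/K}(a)\otimes 1\mapsto d_{F/K}(a)$ and $d_{F/K}(b)\mapsto d_{F/L}(b)$. The claim is the corresponding statement for the K\"ahler-seminorm completions, where by ``semi-exact'' I understand that the composite $\hatpsi_{F/L/K}$ followed by $\whOmega_{F/K}\to\whOmega_{F/L}$ is zero and the second map is surjective (with closed image), i.e. exactness of the underlying abstract sequence is not asserted, only that the image of one map lies in the kernel of the next and the last map is an admissible epimorphism.

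First I would recall the relevant functoriality: completion of a seminormed module is a functor, it is right exact in the appropriate topological sense (it preserves cokernels up to taking closures), and $\whOmega_{F/L}$ is by definition the completion of $\Omega_{F/L}$ with respect to its K\"ahler seminorm. Then I would establish the two structural points. (a) \emph{The last map is a surjection with dense, hence closed, image.} Since $\Omega_{F/K}\to\Omega_{F/L}$ is surjective and the generators $d_{F/K}(b)$ map onto the generators $d_{F/L}(b)$, and since the K\"ahler seminorm on $\Omega_{F/L}$ is, by its construction as a quotient seminorm, at most the image seminorm coming from $\Omega_{F/K}$, the induced map on completions has dense image; a continuous linear map between Banach spaces with dense image that is also a quotient map in seminorm is surjective onto its target (this is exactly where one uses that $\whOmega_{F/L}$ carries the quotient Banach structure, or invokes the open mapping theorem). (b) \emph{Composite is zero.} On the dense subspace generated by the $d_{L/K}(a)\otimes 1$ the composite sends $d_{L/K}(a)\otimes 1$ to $d_{F/L}(a)=0$ because $a\in L$; by continuity the composite vanishes on all of $\whOmega_{L/K}\wtimes_LF$. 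This already gives that the image of $\hatpsi_{F/L/K}$ is contained in $\ker(\whOmega_{F/K}\to\whOmega_{F/L})$, which together with (a) is the asserted semi-exactness.

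The one genuinely delicate point — and the step I expect to be the main obstacle — is the interaction of the \emph{completed tensor product} $\wtimes_LF$ with the completion of $\Omega_{L/K}$: one must check that $\whOmega_{L/K}\wtimes_LF$ is really the completion of $\Omega_{L/K}\otimes_LF$ for the natural seminorm, so that the classical generators $d_{L/K}(a)\otimes b$ are dense with the expected seminorm estimates, and that $\hatpsi_{F/L/K}$ is well defined and continuous on it (norm $\le 1$). This is a base-change compatibility for K\"ahler seminorms; I would handle it by citing the relevant statements from \cite[Section~4 and Section~5]{Temkintopforms}, where completed differentials and their fundamental sequences are set up, and reduce everything to the analogous claims there. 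The remainder is routine: assemble (a), (b), and the base-change identification, and note that no left-exactness is claimed, so no further work on the kernel of $\hatpsi_{F/L/K}$ is needed here (that subtler injectivity question is precisely what Lemma~\ref{noninjlem} and \S\ref{char0sec} address later).
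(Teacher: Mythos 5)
Your proposal is essentially the same approach as the paper's: the paper's proof is the one-line instruction to complete the exact sequence from \cite[Lemma~4.2.2(i)]{Temkintopforms} (with \cite[Lemma~4.3.3]{Temkintopforms} offered as an alternative source), which is exactly your plan of applying the completion functor to the classical first fundamental sequence and tracking the K\"ahler seminorms. You unpack the two pieces of ``semi-exactness'' and flag the base-change subtlety for $\wtimes_LF$, but you correctly defer those technical points to the same reference the paper invokes, so the argument is the same.
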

\begin{proof}
Complete the exact sequence from \cite[Lemma~4.2.2(i)]{Temkintopforms}. (Alternatively, one can deduce this directly from \cite[Lemma~4.3.3]{Temkintopforms}.)
\end{proof}

\subsubsection{Differentials of primitive extensions}\label{difsec}
We refer to \cite[Lemma~2.4]{untilt} for a simple direct computation of the completed module of differentials of a primitive extension $L=\wh{K(t)}$. The upshot is that $\whOmega_{L/K}$ is generated by $\whd_{L/K}(t)$, and it vanishes if and only if $K^s\cap L$ is dense in $L$. In particular, if it vanishes then the extension is of type 1.

\begin{rem}
The completion homomorphism $\Omega_{L/K}\to\whOmega_{L/K}$ may have a huge kernel. So, the K\"ahler seminorm on $\Omega_{L/K}$ is very far from being a norm.
\end{rem}

\subsubsection{Topological dependence}\label{topdimsec}
We say that a subset $T$ of an $L$-Banach space $V$ is {\em topologically generating} over $L$ if $V$ is the closure of the $L$-span of $T$. We say that $T$ is {\em topologically independent} if each proper subset of $T$ topologically generates a proper subspace of the subspace topologically generated by $T$ itself. The cardinals that bound topologically independent and generating sets from above and below, respectively, will be denoted $\topdim_K(V)$ and $\Topdim_K(V)$.

If $S$ is independent than any small enough perturbation of $S$ is also independent. More precisely, one can perturb each $v\in S$ by an element $\veps_v$ whose norm is strictly smaller than the distance from $v$ to the Banach space generated by the rest of $S$, and this is proved by Zorn's lemma precisely as its field analogue Theorem~\ref{deformth}. In particular, if $S$ is topologically independent and $T$ is topologically generating then we can find a perturbation $S'\subseteq\Span_L(T)$ of $S$ which is still independent. So $|S|=|S'|\le|T|$, and hence $\topdim_K(V)\le\Topdim_K(V)$. (Probably, the equality always holds, see Question~\ref{char0quest}(ii) below.)

\begin{lem}\label{toplinear}
Assume that $L/K$ is an extension of analytic fields and $S\subseteq L$ is a subset. Then

(i) If $S$ topologically generates $L$ then $\hatd_{L/K}(S)$ topologically generates $\hatOmega_{L/K}$.

(ii) If $\hatd_{L/K}(S)$ is topologically independent then $S$ is topgebraically independent. In particular, $\topdeg(K/k)\le\topdim_K(\hatOmega_{L/K})$.

(iii) Assume that $\cha(K)=0$. If $S$ is topgebraically generating then $\hatd_{L/K}(S)$ is topologically generating. In particular, $\Topdeg(K/k)\ge\Topdim_K(\hatOmega_{L/K})$.
\end{lem}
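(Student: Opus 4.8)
The plan is to prove the three parts of Lemma~\ref{toplinear} by combining the first fundamental sequence (Lemma~\ref{firstseq}), the computation of differentials of primitive extensions (\S\ref{difsec}), and a transfinite/perturbation argument parallel to the one used for Theorem~\ref{deformth} and Theorem~\ref{mainineq}.

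For part (i), I would argue that $\hatd_{L/K}$ is continuous and its image $\hatd_{L/K}(K(S))$ $K$-spans a subspace containing all $\hatd_{L/K}(f)$ for $f\in K(S)$ by the Leibniz and sum rules, hence $K$-spans a dense subspace of $\hatd_{L/K}(K(S))\cdot L$; since $L=\wh{K(S)}$ and $\hatd_{L/K}$ has dense image in $\hatOmega_{L/K}$ (this is built into the definition of the completed module and is recorded in \cite[\S4]{Temkintopforms}), taking closures shows $\hatd_{L/K}(S)$ topologically generates $\hatOmega_{L/K}$.

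For part (ii), I would prove the contrapositive: if $S$ is topgebraically dependent, say $x\in S$ is topgebraic over $L'=\wh{K(S\setminus\{x\})}$, then $L'':=\wh{L'(x)}=\wh{K(S)}$ is a primitive, type~1 extension of $L'$ after passing along a topgebraic tower, so by \S\ref{difsec} the map $\hatOmega_{L'/K}\wtimes_{L'}L\to\hatOmega_{L/K}$ is topologically surjective (the cokernel $\hatOmega_{L/L'}$ vanishes by the primitive-extension computation, using the first fundamental sequence of Lemma~\ref{firstseq} applied to $L/L''/L'$ together with $\hatOmega_{L/L''}=0$ for topgebraic $L/L''$). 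Hence $\hatd_{L/K}(x)$ lies in the closure of the $L$-span of $\hatd_{L/K}(S\setminus\{x\})$, i.e.\ $\hatd_{L/K}(S)$ is topologically dependent. The ``in particular'' then follows: a topgebraically independent $S$ gives a topologically independent $\hatd_{L/K}(S)$ only after knowing the converse, so instead I should phrase it as: any topgebraically independent set $S$ maps, by the contrapositive just shown, to a topologically independent set, giving $\topdeg(K/k)\le\topdim_K(\hatOmega_{L/K})$.

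For part (iii), the characteristic-zero hypothesis is what makes the first fundamental sequence left-exact for topgebraic (in fact all algebraic separable) extensions, so the real content is to reduce a topgebraically generating set to a topologically generating set. If $S$ topgebraically generates $L$ over $K$, set $F=\wh{K(S)}$; then $L/F$ is topgebraic, hence separable algebraic in the completed sense, so by the char.\ $0$ version of the first fundamental sequence (proved later in \S\ref{char0sec}, which I may cite) $\hatpsi_{L/F/K}$ is an isomorphism $\hatOmega_{F/K}\wtimes_F L\toisom\hatOmega_{L/K}$, and by part (i) $\hatd_{F/K}(S)$ topologically generates $\hatOmega_{F/K}$; transporting along $\hatpsi_{L/F/K}$ shows $\hatd_{L/K}(S)$ topologically generates $\hatOmega_{L/K}$. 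The ``in particular'' is then immediate from the definitions of $\Topdeg$ and $\Topdim$. The main obstacle is part~(iii): I must be careful that the cited left-exactness of the first fundamental sequence in residue characteristic (or characteristic) zero is genuinely available — if it is only proved later, I would instead prove directly that for a topgebraic tower $L/F$ with $\cha=0$ one has $\hatOmega_{L/F}=0$ and $\hatpsi_{L/F/K}$ injective, using that $F^s\cap L$ is dense in $L$ and the explicit primitive computation of \S\ref{difsec}; the vanishing of the cokernel alone (which holds unconditionally by Lemma~\ref{firstseq}) is not enough, so the injectivity of $\hatpsi$ is where the hypothesis $\cha(K)=0$ is essential and must be invoked with care.
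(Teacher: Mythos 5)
Your overall plan is close, but part~(ii) as written has a gap and part~(iii) overcomplicates what is needed. In (ii) you set $L''=\wh{K(S)}$ and then invoke ``$\hatOmega_{L/L''}=0$ for topgebraic $L/L''$'' to conclude that $\hatOmega_{L/L'}=0$. But nothing in the hypotheses forces $L$ to equal or be topgebraic over $\wh{K(S)}$; the set $S$ is an arbitrary subset of $L$, so $L/L''$ is in general a very large extension and $\hatOmega_{L/L''}\neq 0$, hence $\hatOmega_{L/L'}\neq 0$. The paper avoids this by passing first to $F=\wh{K(S)}$: it suffices to show $\hatd_{F/K}(x)$ is in the closed $F$-span of $\hatd_{F/K}(S\setminus\{x\})$, since pushing forward along the continuous map $\hatOmega_{F/K}\wtimes_F L\to\hatOmega_{L/K}$ then yields the same statement in $\hatOmega_{L/K}$. (Equivalently, topological independence of $\hatd_{L/K}(S)$ implies topological independence of $\hatd_{F/K}(S)$ by the contrapositive of this push-forward.) Once in $\hatOmega_{F/K}$ your argument goes through: take $E=\wh{K(T)}$, note $\hatOmega_{F/E}\neq 0$ would be forced if $x$ were topgebraic over $E$, and use semi-exactness of the FFS for $F/E/K$. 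So the idea is right; the flaw is in asserting topological surjectivity of $\hatOmega_{L'/K}\wtimes_{L'}L\to\hatOmega_{L/K}$, which is both unnecessary and false.

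In (iii) your primary route cites \S\ref{char0sec}, which is proved later and depends on Lemma~\ref{toplinear}; you rightly flag this as circular. Your backup route (prove $\hatOmega_{L/F}=0$ directly via density of $F^s\cap L$ in $L$, i.e.\ Ax--Sen) is exactly the paper's argument, and it suffices on its own: once $\hatOmega_{L/F}=0$, the semi-exactness of the FFS for $L/F/K$ already shows that the closure of the image of $\hatpsi_{L/F/K}$ is all of $\hatOmega_{L/K}$, and combining with (i) for $F$ gives that $\hatd_{L/K}(S)$ is topologically generating. You do not need injectivity of $\hatpsi_{L/F/K}$; your remark that ``the vanishing of the cokernel alone is not enough, so the injectivity of $\hatpsi$ is where $\cha(K)=0$ is essential'' inverts the actual role of the hypothesis. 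The characteristic-zero assumption is used only to get $F^s\cap L$ dense (so that $\hatOmega_{L/F}=0$), not to force left-exactness of the completed fundamental sequence. Finally, your version of (i) — pushing the continuity of $\hatd_{L/K}$ and density of $K(S)$ in $L$ rather than citing \cite[Corollary~5.6.7]{Temkintopforms} — is a legitimate alternative and essentially proves the needed special case of that corollary from scratch.
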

\begin{proof}
(i) Set $l=K(S)$, then $L=\hatl$ and hence $\hatOmega_{l/K}=\hatOmega_{L/K}$ by \cite[Corollary~5.6.7]{Temkintopforms}. Since $d_{l/K}(S)$ spans $\Omega_{l/K}$, its image $\hatd_{L/K}(S)$ topologically generates $\hatOmega_{L/K}$.

(ii) Set $F=\wh{K(S)}$. Choose any $x\in S$ and set $T=S\setminus\{x\}$ and $E=\wh{K(T)}$. Since $\whd_{E/K}(T)$ topologically generates $\whOmega_{E/K}$ by (i) and $x$ is not in the completed span of $\whd_{F/K}(T)$, it follows from the semi-exactness of the first fundamental sequence of $F/E/K$ that $\whd_{F/E}(x)\neq 0$. Hence $x$ is topologically transcendental over $E$ by \S\ref{difsec}. This proves that $S$ is topgebraically independent.

(iii) We know by (i) that $\hatd_{F/K}(S)$ topologically generates $\whOmega_{F/K}$. Using the first fundamental sequence for $L/F/K$ we see that it suffices to show that $\whOmega_{L/F}=0$. By Ax-Sen theorem, $l=L\cap F^s$ is dense in $L$. Clearly, $\Omega_{l/F}=0$ and by \cite[Corollary~5.6.7]{Temkintopforms} we obtain that $\whOmega_{L/F}=\whOmega_{l/F}=0$.
\end{proof}

Note that an analogue of (iii) does not hold in positive characteristic because already a finite inseparable extension has a non-trivial differential.

\subsubsection{$\Kcirc$-modules: completions and divisible elements}
We provide $\Kcirc$-modules $M$ with the $\pi$-adic topology, where $\pi=0$ if the valuation is trivial, and $\pi\in\Kcirccirc\setminus\{0\}$ otherwise. In particular, the kernel of the ($\pi$-adic) completion homomorphism $M\to\hatM$ consists of all {\em infinitely divisible} elements $x\in M$, i.e. $x=0$ if the valuation is trivial, and $x$ is divisible by any non-zero element of $\Kcirccirc$ otherwise.

\subsubsection{Differentials of rings of integers}
Given a homomorphism $f\:A\to K$ set $\Acirc=f^{-1}(\Kcirc)$. We make the following assumption on $f$, which will always be satisfied in the sequel: $A$ is a localization of $\Acirc$. Then $\Omega_{\Kcirc/\Acirc}\otimes_{\Kcirc}K=\Omega_{K/\Acirc}=\Omega_{K/A},$ so the maximal torsion-free quotient $(\Omega_{\Kcirc/\Acirc})_\tf$ of $\Omega_{\Kcirc/\Acirc}$ is a {\em semilattice} in $\Omega_{K/A}$, i.e. a $\Kcirc$-submodule of $\Omega_{K/A}$ that spans it as a $K$-vector space.

\begin{lem}\label{kahlerlem}
With the above notation, the K\"ahler seminorm of $\Omega_{K/A}$ is equivalent to the seminorm induced by $(\Omega_{\Kcirc/\Acirc})_\tf$. In particular, an element $x\in(\Omega_{\Kcirc/\Acirc})_\tf$ is infinitely divisible if and only if it is sent to zero in $\whOmega_{K/A}$.
\end{lem}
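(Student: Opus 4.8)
The plan is to compare two seminorms on $\Omega_{K/A}$: the K\"ahler seminorm $\|\cdot\|_{\rm Kah}$ and the seminorm $\|\cdot\|_{L}$ induced by the semilattice $L:=(\Omega_{\Kcirc/\Acirc})_\tf$, for which $\|\omega\|_L=\inf\{|c| : c\in K^\times,\ c^{-1}\omega\in L\}$. Recall from \cite[\S4.1.1]{Temkintopforms} that the K\"ahler seminorm on $\Omega_{K/A}$ is, by definition, the quotient seminorm obtained from the free module with basis the symbols $dx$, $x\in K^\circ$, each of norm $|x|$ (equivalently, it is the largest seminorm making all $d_{K/A}(x)$ satisfy $\|dx\|\le|x|$). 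So the two assertions of the lemma — that the seminorms are equivalent, and that an element of $L$ dies in $\whOmega_{K/A}$ iff it is infinitely divisible — will follow once we show $\|\cdot\|_{\rm Kah}$ and $\|\cdot\|_L$ are equivalent, since by the previous subsubsection the kernel of $M\to\hat M$ is exactly the infinitely divisible elements, and $L$ sits inside $\Omega_{K/A}$ with $\|\cdot\|_L$ inducing on it the $\pi$-adic topology.

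First I would show $\|\omega\|_{\rm Kah}\le\|\omega\|_L$ on $L$ (hence on all of $\Omega_{K/A}$ after scaling). For this it suffices to check that $L$, as a $\Kcirc$-submodule, is contained in the unit ball of the K\"ahler seminorm, i.e. that $\|d_{K/A}(x)\|_{\rm Kah}\le 1$ for $x\in\Kcirc$ generates the right bound; more precisely $L$ is generated over $\Kcirc$ by image of $\Omega_{\Kcirc/\Acirc}$, which is generated by the $d(x)$ with $x\in\Kcirc$, and each such $d(x)=d_{K/A}(x)$ has $\|d_{K/A}(x)\|_{\rm Kah}\le|x|\le 1$. Since $L$ is torsion-free, a general element is of the form $c^{-1}\omega_0$ with $\omega_0\in L$, $c\in\Kcirc$, and $\|c^{-1}\omega_0\|_{\rm Kah}\le|c|^{-1}\le\|c^{-1}\omega_0\|_L$ up to the usual $\inf$; so $\|\cdot\|_{\rm Kah}\le\|\cdot\|_L$.

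The reverse inequality $\|\cdot\|_L\le C\|\cdot\|_{\rm Kah}$ is the real content. The point is that the K\"ahler unit ball $B_{\rm Kah}=\{\|\omega\|_{\rm Kah}\le 1\}$ is itself a semilattice (it spans $\Omega_{K/A}$ over $K$ because $\Omega_{K/A}$ is generated by $d_{K/A}(K^\times)$ and scaling moves everything into it), and it is a $\Kcirc$-submodule; hence after killing torsion it lands in $L\otimes$(fraction field) and one needs a bound. I expect the clean argument is: $B_{\rm Kah}$ is the image of the $\Kcirc$-module $\Omega_{\Kcirc/\Acirc}$ under $\Omega_{\Kcirc/\Acirc}\to\Omega_{K/A}$ — indeed every generator $dx$, $x\in\Kcirc$, of the K\"ahler-unit submodule comes from $\Omega_{\Kcirc/\Acirc}$, and conversely $\Omega_{\Kcirc/\Acirc}$ surjects onto a $\Kcirc$-module all of whose generators have K\"ahler norm $\le 1$, so this image is a submodule of $B_{\rm Kah}$ containing all the distinguished generators. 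Therefore $B_{\rm Kah}$ equals the image of $\Omega_{\Kcirc/\Acirc}$, whose torsion-free quotient is by definition $L$; after modding out the torsion of $\Omega_{K/A}$ (which is zero, as $\Omega_{K/A}$ is a $K$-vector space) we get $B_{\rm Kah}\subseteq L$ exactly, giving $\|\cdot\|_L\le\|\cdot\|_{\rm Kah}$ with $C=1$. The main obstacle, and the step deserving care, is precisely this identification of the K\"ahler unit ball with the image of $\Omega_{\Kcirc/\Acirc}$: one must be careful that the K\"ahler seminorm is the quotient seminorm from the free module on $\{dx\}_{x\in K^\circ}$ with $\|dx\|=|x|$, track how the relations (Leibniz, additivity, $A$-linearity) interact with the $\Kcirc$-module structure, and invoke the hypothesis that $A$ is a localization of $A^\circ$ so that $\Omega_{K/A}=\Omega_{K/A^\circ}=\Omega_{K^\circ/A^\circ}\otimes_{K^\circ}K$ — this is what makes $L$ well-defined and lets the two descriptions match. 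Once the equivalence (indeed equality) of unit balls is in hand, the statement about infinitely divisible elements is immediate from the description of $\ker(M\to\hat M)$ recalled just above the lemma.
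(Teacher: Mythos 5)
Your first direction is fine: the image of $\Omega_{\Kcirc/\Acirc}$ in $\Omega_{K/A}$ is the $\Kcirc$-span of $\{d_{K/A}(x):x\in\Kcirc\}$, each such generator has K\"ahler seminorm $\le |x|\le 1$, so $L=(\Omega_{\Kcirc/\Acirc})_\tf$ sits inside the K\"ahler unit ball and $\|\cdot\|_{\rm Kah}\le\|\cdot\|_L$.

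The reverse direction contains a genuine gap, and it is exactly at the step you flag as ``deserving care.'' You assert that the K\"ahler unit ball $B_{\rm Kah}$ \emph{equals} the image of $\Omega_{\Kcirc/\Acirc}$, so that the two seminorms coincide ($C=1$). That is false. The K\"ahler seminorm is the quotient seminorm coming from $\|d_{K/A}(x)\|\le|x|$, so its unit ball is (essentially) the $\Kcirc$-span of all $c\cdot d_{K/A}(x)$ with $|c|\,|x|\le 1$, not merely of those with $c\in\Kcirc$ and $x\in\Kcirc$. In particular it contains the ``logarithmic'' elements $x^{-1}d_{K/A}(x)$ for $x\in\Kcirccirc\setminus\{0\}$, and these generally do \emph{not} lie in the $\Kcirc$-submodule generated by $\{d_{K/A}(y):y\in\Kcirc\}$. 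So $B_{\rm Kah}\supsetneq$ (image of $\Omega_{\Kcirc/\Acirc}$) in general, the two seminorms are typically not equal, and the lemma rightly claims only equivalence.

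Making the reverse inequality precise is the real content, and the paper does it by routing through logarithmic differentials. It cites \cite[Theorem~5.1.8(ii)]{Temkintopforms} to identify the K\"ahler seminorm with the seminorm associated to the semilattice $(\Omega^{\rmlog}_{\Kcirc/\Acirc})_\tf$ — the logarithmic module is exactly what accounts for the extra elements $x^{-1}dx$ you are missing — and then cites \cite[Corollary~5.3.3]{Temkintopforms} for the comparison: the natural map $\Omega_{\Kcirc/\Acirc}\to\Omega^{\rmlog}_{\Kcirc/\Acirc}$ is an isomorphism when the valuation is trivial, and otherwise its kernel and cokernel are killed by every element of $\Kcirccirc$. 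That bounded discrepancy between the two semilattices is what gives equivalence of the seminorms (and, since equivalent seminorms have the same null space, the ``in particular'' about infinitely divisible elements). Your proof as written produces no such bound, because it has no replacement for the log/non-log comparison; patching it would in effect require reproving Corollary~5.3.3.
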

\begin{proof}
The first claim follows from the following two facts: (1) $\|\ \|_\Omega$ is the seminorm associated with a module $(\Omega_{\Kcirc/\Acirc}^\rmlog)_\tf$ by \cite[Theorem~5.1.8(ii)]{Temkintopforms},\footnote{I am grateful to the referee for pointing out that \cite[Theorem~5.1.8(ii)]{Temkintopforms} is stated incorrectly. This should be fixed by adding the assumption that $A$ is a localization of $\Acirc$ and does not affect anything else in that paper.} (2) the map $f\:\Omega_{\Kcirc/\Acirc}\to\Omega_{\Kcirc/\Acirc}^\rmlog$ is an isomorphism if the valuation is trivial, and the kernel and the cokernel of $f$ are killed by any element of $\Kcirccirc$ otherwise, see \cite[Corollary~5.3.3]{Temkintopforms}. In particular, it follows that $x$ is infinitely divisible if and only if $\|x\|_\Omega=0$, i.e. $x$ is killed by the completion homomorphism.
\end{proof}

\subsubsection{Non-injectivity of $\hatpsi_{F/L/K}$}
Now we can study when the first fundamental sequence does not extend to a short exact sequence.

\begin{lem}\label{noninjlem}
Let $F/L/K$ be a tower of analytic fields such that $\whOmega_{L/K}$ is finite-dimensional and $\hatpsi_{F/L/K}$ is not injective. Then either $F/L$ is not separable or $\Omega_{\Fcirc/\Lcirc}$ contains an infinitely divisible non-zero torsion element.
\end{lem}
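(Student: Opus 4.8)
The plan is to analyze what it means for a nonzero element of the kernel of $\hatpsi_{F/L/K}$ to exist, by pulling everything down to the level of rings of integers and using Lemma~\ref{kahlerlem}. So suppose $F/L$ is separable; I must produce an infinitely divisible nonzero torsion element in $\Omega_{\Fcirc/\Lcirc}$. Since $\whOmega_{L/K}$ is finite-dimensional, $\whOmega_{L/K}\wtimes_LF$ is a finite-dimensional $F$-Banach space, and I would pick a nonzero $\omega$ in the kernel of $\hatpsi_{F/L/K}$. Using that $(\Omega_{\Lcirc/\Kcirc})_\tf$ induces (up to equivalence) the K\"ahler seminorm on $\Omega_{L/K}$ by Lemma~\ref{kahlerlem}, and likewise for $\Omega_{F/K}$, I can represent $\omega$ — after scaling by an element of $F^\times$ — by an element $\eta\in(\Omega_{\Lcirc/\Kcirc})_\tf\otimes_{\Lcirc}\Fcirc$ whose image in $(\Omega_{\Fcirc/\Kcirc})_\tf$ is infinitely divisible, i.e.\ has zero K\"ahler seminorm, while $\eta$ itself is \emph{not} infinitely divisible in $(\Omega_{\Lcirc/\Kcirc})_\tf\otimes_{\Lcirc}\Fcirc$ (this is exactly the failure of injectivity of $\hatpsi_{F/L/K}$, after noting the completion of $(\Omega_{\Lcirc/\Kcirc})_\tf\otimes_{\Lcirc}\Fcirc$ computes $\whOmega_{L/K}\wtimes_LF$, the finite-dimensionality making the tensor-then-complete and complete-then-tensor operations agree).

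Next I would invoke the first fundamental sequence at the level of integral differentials: there is an exact sequence
$$\Omega_{\Lcirc/\Kcirc}\otimes_{\Lcirc}\Fcirc\longrightarrow\Omega_{\Fcirc/\Kcirc}\longrightarrow\Omega_{\Fcirc/\Lcirc}\longrightarrow 0.$$
Let $\rho$ denote the image of $\eta$ in $\Omega_{\Fcirc/\Kcirc}$. The element $\rho$ is infinitely divisible in $(\Omega_{\Fcirc/\Kcirc})_\tf$ by the previous paragraph, but I need it to be nonzero somewhere useful. The key point to extract is that $\rho$ dies in $(\Omega_{\Fcirc/\Lcirc})$: indeed, $\eta$ is a combination of symbols $d$ of elements of $\Lcirc$, so its image in $\Omega_{\Fcirc/\Lcirc}$ vanishes. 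Hence $\rho$ lifts to $\Omega_{\Fcirc/\Kcirc}$ but came from $\Omega_{\Lcirc/\Kcirc}\otimes\Fcirc$ and is infinitely divisible in the torsion-free quotient. Now the separability of $F/L$ enters: when $F/L$ is separable, $\Omega_{F/L}=0$, so after inverting $\pi$ the sequence $\Omega_{L/K}\otimes_LF\to\Omega_{F/K}$ is an isomorphism; combined with $F/L$ being an equality of residue characteristics data, the integral map $\Omega_{\Lcirc/\Kcirc}\otimes_{\Lcirc}\Fcirc\to(\Omega_{\Fcirc/\Kcirc})_\tf$ is injective. Therefore $\rho\ne 0$ and it is an infinitely divisible element of $(\Omega_{\Fcirc/\Kcirc})_\tf$, hence of $\Omega_{\Fcirc/\Kcirc}$ itself. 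Finally I would transfer this from $\Omega_{\Fcirc/\Kcirc}$ to $\Omega_{\Fcirc/\Lcirc}$: consider the second fundamental sequence $\Omega_{\Lcirc/\Kcirc}\otimes_{\Lcirc}\Fcirc\to\Omega_{\Fcirc/\Kcirc}\to\Omega_{\Fcirc/\Lcirc}\to 0$ — wait, that is the first one. The honest move here is: since $\rho$ comes from $\Omega_{\Lcirc/\Kcirc}\otimes\Fcirc$, it is killed in $\Omega_{\Fcirc/\Lcirc}$; so to land in $\Omega_{\Fcirc/\Lcirc}$ I instead use that $\rho$ being infinitely divisible and nonzero forces $\Omega_{\Fcirc/\Kcirc}$ to have infinitely divisible torsion, and then a diagram chase using that $\Omega_{\Lcirc/\Kcirc}\otimes\Fcirc$ has \emph{no} infinitely divisible torsion (as $\Omega_{L/K}$ is finite-dimensional, so $(\Omega_{\Lcirc/\Kcirc})_\tf$ is a lattice and $\whOmega_{L/K}$ is a genuine norm modulo the finite-dimensional subtleties) pushes the divisibility phenomenon into $\Omega_{\Fcirc/\Lcirc}$ via the connecting behavior.

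I expect the main obstacle to be precisely this last transfer: controlling where the infinitely divisible element "lives" after passing through the first fundamental sequence, and in particular showing that the obstruction cannot be absorbed by $\Omega_{\Lcirc/\Kcirc}\otimes\Fcirc$ (which it can't, by finite-dimensionality of $\whOmega_{L/K}$, but making this precise at the integral level — i.e.\ the difference between $(\Omega_{\Lcirc/\Kcirc}\otimes\Fcirc)_\tf$ and $\Omega_{\Lcirc/\Kcirc}\otimes\Fcirc$ and how torsion behaves under $\otimes_{\Lcirc}\Fcirc$ — requires care). The separability hypothesis should be used exactly to guarantee $\Omega_{F/L}$ is torsion (indeed zero after inverting $\pi$), so that any failure of injectivity of $\hatpsi$ is necessarily a torsion, hence an integral, phenomenon rather than a rank drop. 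I would also double-check the edge case of trivial valuation, where "infinitely divisible" means "zero" and the statement reduces to the classical fact that separable extensions have injective $\psi_{F/L/K}$.
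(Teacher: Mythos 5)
You set up the argument exactly as the paper does: use finite-dimensionality to lift a nonzero element of $\Ker(\hatpsi_{F/L/K})$ to $\Omega_{L/K}\otimes_LF$, scale it into the torsion-free integral lattice, invoke Lemma~\ref{kahlerlem} to translate ``dies in the completion'' into ``infinitely divisible,'' and use separability of $F/L$ to get injectivity of the integral map $\psi^\circ_{\Fcirc/\Lcirc/\Kcirc}$. All of that matches the paper. But you stop at precisely the point where the actual torsion element in $\Omega_{\Fcirc/\Lcirc}$ must be produced, and you explicitly flag that step as the ``main obstacle'' that ``requires care'' without resolving it. That step is the content of the lemma, so the proof is incomplete.

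Here is the missing device. You have $\xcirc\in\Omega_{\Lcirc/\Kcirc}\otimes_{\Lcirc}\Fcirc$ non-torsion and \emph{not} infinitely divisible, and $y=\psi^\circ(\xcirc)\in\Omega_{\Fcirc/\Kcirc}$ infinitely divisible. Choose a nonzero $\pi\in\Lcirccirc$ such that $\pi^{-1}\xcirc$ does \emph{not} exist in $\Omega_{\Lcirc/\Kcirc}\otimes_{\Lcirc}\Fcirc$ (possible since $\xcirc$ is not infinitely divisible). Since $y$ is infinitely divisible, pick $z\in\Omega_{\Fcirc/\Kcirc}$ with $\pi z=y$, and let $\bar z$ be its image in $\Omega_{\Fcirc/\Lcirc}$. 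Then $\pi\bar z$ is the image of $y\in\im\psi^\circ$, hence $\pi\bar z=0$, so $\bar z$ is torsion. It is nonzero: if $\bar z=0$ then $z=\psi^\circ(w)$ for some $w$, and injectivity of $\psi^\circ$ forces $\pi w=\xcirc$, contradicting the choice of $\pi$. And $\bar z$ is infinitely divisible because $z=\pi^{-1}y$ is ($y$ being infinitely divisible, $\sigma^{-1}\pi^{-1}y$ exists for all nonzero $\sigma\in\Lcirccirc$). This is exactly the diagram chase you anticipated but did not carry out: it is the injectivity of $\psi^\circ$ together with the \emph{mismatch} in divisibility between $\xcirc$ and $y$ that forces the divisible element into $\Omega_{\Fcirc/\Lcirc}$.

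Two smaller issues. Your remark ``$F/L$ separable implies $\Omega_{F/L}=0$'' is false unless $F/L$ is also algebraic; separability only gives injectivity of $\psi_{F/L/K}$ at the level of field differentials, which is what you actually need. And the sentence claiming that ``$\rho$ being infinitely divisible and nonzero forces $\Omega_{\Fcirc/\Kcirc}$ to have infinitely divisible torsion'' is not right as stated: $\rho$ (the paper's $y$) is \emph{non}-torsion, so its divisibility by itself says nothing about torsion; the torsion appears only after dividing by a $\pi$ that is too large for $\xcirc$, as above.
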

\begin{proof}
It suffices, assuming that $F/L$ is separable, to find a non-zero infinitely divisible torsion element. The finite dimensionality implies that $\Omega_{L/K}\to\whOmega_{L/K}$ is onto. Lifting an element of $\Ker(\hatpsi_{F/L/K})$ to $\Omega_{L/K}\otimes_LF$ we obtain an element $x$ whose image in $\whOmega_{L/K}\wtimes_LF$ does not vanish but whose image in $\whOmega_{F/K}$ dies. Multiplying $x$ by an appropriate non-zero element of $\Lcirccirc$ we can also achieve that $x$ lies in $(\Omega_{\Lcirc/\Kcirc}\otimes_{\Lcirc}\Fcirc)_\tf$ and hence lifts to a non-torsion element $\xcirc\in\Omega_{\Lcirc/\Kcirc}\otimes_{\Lcirc}\Fcirc$. By \cite[Theorem~5.2.3(ii)]{Temkintopforms}, the map $$\psi^\circ_{\Fcirc/\Lcirc/\Kcirc}\:\Omega_{\Lcirc/\Kcirc}\otimes_{\Lcirc}\Fcirc\to\Omega_{\Fcirc/\Kcirc}$$ is injective, hence $y=\psi^\circ_{\Fcirc/\Lcirc/\Kcirc}(\xcirc)$ is a non-torsion element of $\Omega_{\Fcirc/\Kcirc}$ whose image in $\whOmega_{F/K}$ vanishes. By Lemma~\ref{kahlerlem}, we obtain that $\xcirc$ is not infinitely divisible but $y$ is. Find a non-zero $\pi\in\Lcirccirc$ such that $\pi^{-1}\xcirc\notin\Omega_{\Lcirc/\Kcirc}\otimes_{\Kcirc}\Lcirc$. Then the image of $\pi^{-1}y$ in $\Omega_{\Fcirc/\Lcirc}$ is an infinitely divisible non-zero torsion element.
\end{proof}

\subsection{Residue characteristic zero}\label{char0sec}
In this section we consider only analytic fields of residual characteristic zero. Using completed differentials we will show that in this case the topgebraic theory is nearly as nice as the algebraic one.

\subsubsection{The first fundamental sequence}
The crucial fact we are going to use is that the torsion of differentials is bounded when $\cha(\tilK)=0$.

\begin{lem}\label{torsionlem}
Let $L/K$ be an extension of valued fields of residual characteristic zero. Then the torsion of $\Omega_{\Lcirc/\Kcirc}$ is killed by any element of $\Kcirccirc$.
\end{lem}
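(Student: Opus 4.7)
The strategy is to compare $\Omega_{\Lcirc/\Kcirc}$ with its logarithmic cousin $\Omega^{\rmlog}_{\Lcirc/\Kcirc}$ from \cite[\S5]{Temkintopforms} and exploit the tameness of extensions in residue characteristic zero. First, I would recall from \cite[Corollary~5.3.3]{Temkintopforms} --- already invoked in the proof of Lemma~\ref{kahlerlem} --- that the canonical map $f\colon \Omega_{\Lcirc/\Kcirc}\to\Omega^{\rmlog}_{\Lcirc/\Kcirc}$ has kernel (and cokernel) killed by every element of $\Kcirccirc$, independently of the residue characteristic. This reduces the lemma to showing that the image $f(\omega)$ of a torsion element vanishes, because then $\omega\in\Ker(f)$ is automatically killed by $\pi$.

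The main step, which uses $\cha(\tilK)=0$ crucially, is to show that $\Omega^{\rmlog}_{\Lcirc/\Kcirc}$ is torsion-free. The heuristic is that log differentials see ramification only in a tame (logarithmic) fashion, and in residue characteristic zero every extension is tame, so the log different vanishes and no torsion can appear. To make this rigorous I would write $\Omega^{\rmlog}_{\Lcirc/\Kcirc}$ as a filtered colimit of $\Omega^{\rmlog}_{B/A}$ over finitely generated sub-valuation-ring extensions $A\subseteq\Kcirc$, $B\subseteq\Lcirc$. For each such piece the computation reduces to the finite tame case treated in Gabber--Ramero \cite[Chapter~6]{Gabber-Ramero}, where the invertibility of positive integers in residue characteristic zero forces every potential torsion generator to vanish: e.g., a relation such as $n\cdot d\log(t^{1/n})=d\log(t)$ becomes trivial because $n$ is a unit and $d\log(t)=0$ when $t\in\Kcirc$. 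Since torsion-freeness passes to filtered colimits, the conclusion follows.

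Combining the two steps: a torsion $\omega\in\Omega_{\Lcirc/\Kcirc}$ maps to a torsion element of the torsion-free $\Omega^{\rmlog}_{\Lcirc/\Kcirc}$, hence $f(\omega)=0$, and so $\pi\omega=0$ for every $\pi\in\Kcirccirc$. The main obstacle is the torsion-freeness of $\Omega^{\rmlog}_{\Lcirc/\Kcirc}$ in residue characteristic zero. While the tameness picture is persuasive, the rigorous argument requires care in choosing the colimit system (not every finitely generated $\Kcirc$-subalgebra of $\Lcirc$ is a valuation ring, so one must take valuation closures inside $\Lcirc$) and in checking that the transition maps preserve the absence of torsion --- a point where the interplay between the log structure and the invertibility of integers has to be exploited carefully. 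An alternative, more direct route would bypass $\Omega^{\rmlog}$ by a case analysis comparing the valuations of the annihilator $a$ and $\pi$ in $\Lcirc$, but this still seems to need an inductive use of the identity $d(x^n)=nx^{n-1}dx$ (with $n$ invertible) to redistribute scalars, and so is not obviously shorter.
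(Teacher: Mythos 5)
Your route is genuinely different from the paper's. The paper argues in two reduction steps rather than via log differentials: first, it reduces to the case $L=L^a$ using \cite[Theorem~5.2.3(ii)]{Temkintopforms}, which says $\Omega_{\Lcirc/\Kcirc}\otimes_{\Lcirc}(L^a)^\circ$ embeds into $\Omega_{(L^a)^\circ/\Kcirc}$; second, it uses the first fundamental sequence for $\Lcirc/(K^a)^\circ/\Kcirc$ together with the torsion-freeness of $\Omega_{(L^a)^\circ/(K^a)^\circ}$ from \cite[Theorem~6.5.20(i)]{Gabber-Ramero} (valid because $L^a$ is algebraically closed) to reduce to the extension $K^a/K$; finally, $K^a/K$ is tame because $\cha(\tilK)=0$, and \cite[Lemma~5.2.7]{Temkintopforms} finishes the argument. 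No comparison with $\Omega^{\rmlog}$ is needed at any point.

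The gap in your proposal is precisely where you flag it: the torsion-freeness of $\Omega^{\rmlog}_{\Lcirc/\Kcirc}$ for a general extension of valued fields in residue characteristic zero is not established, and your sketch of a proof has real problems. Writing $\Omega^{\rmlog}_{\Lcirc/\Kcirc}$ as a filtered colimit over finitely generated sub-valuation-ring extensions does not behave well: finitely generated $\Kcirc$-subalgebras of $\Lcirc$ are not valuation rings, their ``valuation closures'' inside $\Lcirc$ are no longer finitely generated, and torsion-freeness does not pass through colimits of maps that are themselves unknown to be injective. Moreover, even if the claim is true (and I believe it is), the natural way to prove it is by the same three-step reduction the paper uses for $\Omega$ itself --- reduce to $L^a$, then to $K^a/K$, then invoke tameness --- at which point the detour through $\Omega^{\rmlog}$ adds nothing and the comparison map $f$ is superfluous. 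You would also be proving a strictly stronger statement than the lemma requires, at a real cost in technical overhead. I would recommend adopting the paper's direct reduction.
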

\begin{proof}
By \cite[Theorem 5.2.3(ii)]{Temkintopforms}, $\Omega_{\Lcirc/\Kcirc}\otimes_{\Lcirc}(L^a)^\circ$ embeds into $\Omega_{(L^a)^\circ/\Kcirc}$, hence it suffices to prove the claim for $L^a/K$. Assume now that $L=L^a$. Then using the first fundamental sequence for $\Lcirc/(K^a)^\circ/\Kcirc$ and the fact that $\Omega_{\Lcirc/(K^a)^\circ}$ is torsion free by \cite[Theorem~6.5.20(i)]{Gabber-Ramero}, we see that it suffices to prove the theorem for the extension $K^a/K$. The latter case is covered by \cite[Lemma~5.2.7]{Temkintopforms} since $K^a/K$ is tame.
\end{proof}

In Lemma~\ref{noninjlem} we saw two situations when $\hatpsi_{F/L/K}$ may fail to be injective. Neither can happen when $\cha(\tilK)=0$. Indeed, $F/L$ is separable as $\cha(L)=0$ and $\Omega_{\Fcirc/\Lcirc}$ contains no infinitely divisible non-zero torsion elements by Lemma~\ref{torsionlem}. In fact, the situation with the first fundamental sequence is as good as possible when $\cha(\tilK)=0$.

\begin{theor}\label{exactadmiss}
If $F/L/K$ is a tower of analytic fields of residual characteristic zero then the sequence $$0\to\whOmega_{L/K}\wtimes_LF\stackrel{\hatpsi_{F/L/K}}{\longrightarrow}\whOmega_{F/K}\to\whOmega_{F/L}\to 0$$ is exact and admissible.
\end{theor}
\begin{proof}
Since $\cha(K)=0$, we have the short exact sequence $$0\to\Omega_{L/K}\otimes_LF\stackrel{\psi_{F/L/K}}{\longrightarrow}\Omega_{F/K}\stackrel{f}\to\Omega_{F/L}\to 0.$$ Once we prove that this exact sequence is admissible, the theorem will follow by applying the completion functor. By Lemma~\ref{kahlerlem}, the (K\"ahler) seminorm on $\Omega_{L/K}\otimes_LF$ is equivalent to the one induced by the semilattice $(\Omega_{\Lcirc/\Kcirc}\otimes_{\Lcirc}\Fcirc)_\tf$, and the seminorm on $\Ker(f)$ is equivalent to the one induced from the restriction of $(\Omega_{\Fcirc/\Kcirc})_\tf$ onto $\Ker(f)$. The quotient of these two semilattices embeds into $\Omega_{\Fcirc/\Lcirc}$ via the first fundamental sequence for $\Fcirc/\Lcirc/\Kcirc$, hence by Lemma~\ref{torsionlem} it is killed by elements of $\Lcirccirc$. So, the semilattices define equivalent seminorms, i.e. the sequence is admissible.
\end{proof}

\subsubsection{Linearization}
Now, we can strengthen Lemma~\ref{toplinear} as follows.

\begin{theor}\label{char0th}
Assume that $F/K$ is an extension of analytic fields of residual characteristic zero. Then a subset $S\subseteq F$ is a topgebraically generating (resp. independent) over $K$ if and only if the subset $\whd_{F/K}(S)\subseteq\whOmega_{F/K}$ is topologically generating (resp. independent) over $F$. In particular, $\topdeg(F/K)=\topdim_F(\hatOmega_{F/K})$ and $\Topdeg(F/K)=\Topdim_F(\hatOmega_{F/K})$.
\end{theor}
\begin{proof}
We start with the following claim corresponding to $S=\emptyset$: the extension $F/K$ is topgebraic if and only if $\whOmega_{F/K}=0$. Indeed, if $F\subseteq\whKa$ then $\whOmega_{F/K}\wtimes_K\whKa$ embeds into $\whOmega_{\whKa/K}$ by Theorem~\ref{exactadmiss}, but $\whOmega_{\whKa/K}=0$ because $\Omega_{K^a/K}=0$. Conversely, if $F/K$ is not topgebraic then we pick $t\in F$ which is not topgebraic over $K$ and set $L=\wh{K(t)}$. By \S\ref{difsec} $\whd_{L/K}(t)\neq 0$ and applying Theorem~\ref{exactadmiss} to $F/L/K$ we obtain that $\whd_{F/K}(t)\neq 0$, proving that $\whOmega_{F/K}\neq 0$.

Now, set $L=\wh{K(S)}$ and recall that $\whd_{L/K}(S)$ topologically generates $\whOmega_{L/K}$ by Lemma~\ref{toplinear}(i). Applying Theorem~\ref{exactadmiss} to $F/L/K$ we obtain that $\whd_{F/K}(S)$ is topologically generating if and only if $\whOmega_{F/L}$ vanishes. By the above claim this happens if and only if $F/L$ is topgebraic, that is $S$ topgebraically generates $F$ over $K$.

In the same way, applying Theorem~\ref{exactadmiss} to the tower $\wh{K(S)}/\wh{K(T)}/K$, where $T$ is a subset of $S$, one sees that $\wh{K(S)}$ is topgebraic over $\wh{K(T)}$ if and only if $\whd_{F/K}(T)$ and $\whd_{F/K}(S)$ topologically generate the same Banach subspace. This implies the claim about topgebraic independence.
\end{proof}

\subsubsection{Applications to topological transcendence degree}
Now, we can prove that, under a finiteness assumption, the topological transcendence degree behaves as nice as possible.

\begin{theor}\label{topdegchar0}
Let $F/K$ be an extension of analytic fields with $\cha(\tilK)=0$.

(i) If $\topdeg_{F/K}<\infty$ then $$\topdeg(F/K)=\Topdeg(F/K)=\dim_F(\whOmega_{F/K})$$ and this is also the size of any maximal topgebraically independent subset $S\subset F$ and any minimal topgebraically generating subset $T\subset F$. In particular, any such subset is a topological transcendence basis of $F/K$.

(ii) If $L$ is an intermediate analytic field and either $F/K$ or both $F/L$ and $L/K$ are of finite topological transcendence degree then $$\topdeg(F/K)=\topdeg(F/L)+\topdeg(L/K).$$
\end{theor}
\begin{proof}
Note that a finite subset of $\whOmega_{F/K}$ is linearly independent if and only if it is topologically linearly independent. Hence Theorem~\ref{char0th} reduces all assertions of (i) to basic claims of usual linear algebra. Part (ii) is proved similarly, but one should also use Theorem~\ref{exactadmiss}.
\end{proof}

\subsubsection{Infinite degree}
The situation with extensions of infinite degree is not so nice, but at least the complexity comes only from the usual theory of Banach spaces. We do not pursue this direction, but here are some speculations.

\begin{question}\label{char0quest}
(i) If $\cha(\tilK)=0$, can it happen that $F/K$ does not admit a topological transcendence basis? Here is an idea of the construction: find a $K$-Banach space $V$ that does not admit a topological basis (i.e. a topologically independent and generating set), provide $K[V]$ with the maximal norm extending the norms of $K$ and $V$, show that this norm is multiplicative and hence induces a valuation on $K(V)=\Frac(K[V])$, set $F=\wh{K(V)}$ and prove that $\whOmega_{F/K}=V$.

(ii) Is it true that nevertheless $\topdeg(F/K)=\Topdeg(F/K)$ for any extension $F/K$? Note that by Theorem~\ref{char0th} it suffices to show that $\topdim_K(V)=\Topdim_K(V)$ for the $K$-Banach space $V=\whOmega_{F/K}$. It seems plausible that this equality holds for an arbitrary $V$, and I am grateful to Andrzej Szankowski for suggesting to prove this using the topological dual $V'$ and a maximal biorthogonal family $\{S\subset V,S'\subset V'\}$. Such a family exists by Zorn's lemma, and, at least, this reduces the question to showing that $\Topdim_K(V')\ge\Topdim_K(V)$.
\end{question}

\section{Pathologies}\label{pathologysec}
In this section we collect various pathological examples that should illustrate the difference with the algebraic theory of field extensions.

\subsection{Primitive extensions}\label{primsec}
The main pathology revealed by primitive extensions $l/k$ is that for type 1 and 4 extensions it can happen that $l$ contains an infinite algebraic extension of $k$.

\subsubsection{Splitting radius}
For any $\alp\in k^a$ by the {\em splitting radius} $r_\spl(\alp/k)$ we mean the minimal distance between $\alp$ and its $k$-conjugates. In particular, $r_\spl(\alp/k)>0$ if and only if $\alp$ is separable over $k$. By a version of Krasner's lemma, see \cite[Lemma~3.1.3(i)]{insepunif}, $r<r_\spl(\alp/k)$ if and only if the disc $E_k(\alp,r)$ is a split $k(\alp)$-disc.

\subsubsection{Type 1 case}
We start with extensions of type 1. As we are going to prove, a surprisingly huge class of topgebraic extensions are primitive. In particular, this shows that being topologically finitely generated is a rather weak condition comparing to the algebraic analogue.

\begin{theor}\label{type1exam}
Let $k$ be an analytic field with a non-trivial valuation. If $l$ is the completion of a countably generated separable algebraic extension of $k$ then the extension $l/k$ is primitive.
\end{theor}
\begin{proof}
By our assumption $l=\wh{k(a_0,a_1,\dots)}$ for a sequence $a_0,a_1,\dots$ of elements of $k^s$. Choose non-zero elements $\pi_0,\pi_1,\dots$ in $k$ such that the sequence $|\pi_ia_i|$ monotonically tends to zero and $|\pi_na_n|<r_\spl(\pi_na_n/k_n)$ for each $n>0$, where $k_n=k(a_0\.a_{n-1})$ and $k_0=k$. Consider the element $t=\sum_{i=0}^\infty\pi_ia_i\in\whka$, then $l'=\wh{k(t)}$ is an analytic subfield of $l$. We claim that $a_n\in l'$ for any $n\ge 0$. Indeed, by induction on $n$ we can assume that $k_n\subseteq l'$, hence $t_n=\sum_{i=n}^\infty\pi_ia_i\in l'$ and then $l'=\wh{k_{n}(t_n)}$ contains $\pi_n a_n$ by Krasner's lemma, see \cite[Lemma~3.1.3(i)]{insepunif}. It follows that, in fact, $l'=l$. In particular, $l/k$ is primitive.
\end{proof}

\begin{cor}
If $k$ is the completion of a countable field then the extension $\whka/k$ is primitive. In particular, so are the extensions $\bfC_p/\bfQ_p$ and $\wh{\bfF_p((t))^a}/\bfF_p((t))$.
\end{cor}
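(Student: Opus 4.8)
The plan is to derive the corollary directly from Theorem~\ref{type1exam}. The statement to prove is that if $k$ is the completion of a countable field $k_0$, then $\whka/k$ is primitive, with the two named examples $\bfC_p/\bfQ_p$ and $\wh{\bfF_p((t))^a}/\bfF_p((t))$ as special cases.

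First I would reduce to the separable case. Since $k$ is the completion of a countable field, I would like to apply Theorem~\ref{type1exam}, which requires a \emph{separable} countably generated algebraic extension. If $\cha(k)=0$ this is automatic, so the interesting point is positive characteristic. Here I would argue that $\whka=\wh{\wh{k^s}{}^a}$ and that $\wh{k^s}{}^a/\wh{k^s}$ is again an extension of the same shape — i.e. $\wh{k^s}$ is the completion of a countable field (its value group and residue field considerations aside, the key point is that $k^s$ is a countably generated, hence countable, separable algebraic extension of $k_0$, so $\wh{k^s}=\wh{k^s}$ is the completion of the countable field $k^s$, and by Theorem~\ref{type1exam} already $\wh{k^s}/k$ is primitive). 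Then I would invoke the standard fact that over a complete field of positive characteristic the perfection is obtained by adjoining $p$-power roots, and that $\wh{(k^s)^a}/\wh{k^s}$ is purely inseparable-then-trivial; more carefully, $k^a/k^s$ is purely inseparable, and one checks that $\wh{(k^a)} = \wh{(k^s)}$ when... — actually the cleaner route: apply Theorem~\ref{type1exam} with the field $k^s$ in place of $k$, noting $k^a/k^s$ is purely inseparable, hence $(k^a)^s = k^s$... no. The honest statement is: it suffices to exhibit $\whka$ as the completion of a countably generated \emph{separable} extension of \emph{some} complete subfield, and one takes that subfield to be $k$ itself together with the observation that in the cases at hand $k^a/k$ is in fact separable ($\bfQ_p$ is perfect) or one works with $k^s$ and then handles the purely inseparable part separately.

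Concretely, the cleanest writeup I would give: By hypothesis $k=\wh{k_0}$ with $k_0$ countable, so $k^s$ is a countably generated separable algebraic extension of $k$ (as $k_0^s$ is countable and $k^s = \wh{k_0^s}{}^s$ has the same cardinality bound on a generating set). By Theorem~\ref{type1exam}, $\wh{k^s}/k$ is primitive, say $\wh{k^s}=\wh{k(t)}$. Now $\whka/\wh{k^s}$ is topgebraic and, being the completed purely inseparable closure, is handled by the same splitting-radius argument applied over $\wh{k^s}$ — but purely inseparable elements have splitting radius issues, so instead I would note $\whka=\wh{k^s}$ whenever $\cha(k)=0$, giving $\bfC_p/\bfQ_p$, and for $\wh{\bfF_p((t))^a}/\bfF_p((t))$ observe that $\bfF_p((t))^a = \bigcup_n \bfF_p((t^{1/n}))^s$ is still countably generated and separable over no single... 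The main obstacle, and the thing I expect to spend the most care on, is exactly this positive-characteristic bookkeeping: ensuring that the relevant algebraic closure is the completion of a \emph{separable} countably generated extension so that Theorem~\ref{type1exam} applies verbatim. For the two named fields this is transparent: $\bfQ_p$ has characteristic zero so $\whQpa=\bfC_p$ with $\Qp^a/\Qp$ countably generated separable; and $\bfF_p((t))^a = \wh{\bigcup_n \bfF_p((t^{1/n}))}{}^s$ is the completion of a countable field, with the perfect closure absorbed, so Theorem~\ref{type1exam} applies directly. I would state the corollary's proof in one or two sentences: apply Theorem~\ref{type1exam} to $k$ and the extension $\whka/k$, which is the completion of the countably generated extension $k^a/k$; in characteristic zero this is separable, and in the two displayed examples one checks separability (or reduces to the separable closure) directly.
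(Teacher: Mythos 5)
The paper's proof is two lines: since $k$ is the completion of a countable field, $k^s/k$ is countably generated (via Krasner, every finite separable extension of $k$ descends to $k_0$, so $k^s = k\cdot k_0^s$ with $k_0^s$ countable); then one invokes the standard fact that $\whka=\wh{k^s}$ for any analytic field — the completion of the separable closure is already algebraically closed — and applies Theorem~\ref{type1exam}.

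Your proposal identifies the right issue (separability in positive characteristic is what must be addressed for Theorem~\ref{type1exam} to apply) but it has a genuine gap: you never establish, and seem not to have available, the identity $\whka=\wh{k^s}$ in positive characteristic. You assert it "whenever $\cha(k)=0$," which is vacuous, and then try to handle $\wh{\bfF_p((t))^a}$ by writing $\bfF_p((t))^a$ as $\bigcup_n \bfF_p((t^{1/n}))^s$. That identification is not correct as stated (you want the separable closure of the perfection), and more importantly the intermediate field $\bigcup_n\bfF_p((t^{1/p^n}))$ is not complete, so Theorem~\ref{type1exam} does not apply over it — the theorem's hypothesis requires an analytic base field. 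Several of your sentences trail off precisely at this point ("separable over no single..."), and the final "one checks separability directly" does not close the hole, since $\bfF_p((t))^a/\bfF_p((t))$ is \emph{not} separable. The fact you need, and which the paper takes for granted, is that $k^s$ is dense in $k^a$ (equivalently $\wh{k^s}$ is perfect, hence algebraically closed) for every analytic $k$; with that in hand the proof really is one line, $\whka=\wh{k^s}$ plus Theorem~\ref{type1exam}, with no characteristic split at all. Without it your argument does not cover the second named example, which is arguably the more interesting one.
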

\begin{proof}
The assumption on $k$ implies that $k^s/k$ is countably generated, hence $\whka=\wh{k^s}$ is a primitive extension of $k$ by Theorem~\ref{type1exam}.
\end{proof}

\begin{rem}\label{type1rem}
The geometric interpretation of the proof of Lemma~\ref{type1exam} is that one constructs a nested sequence of discs $E_n$ whose intersection is a single point $x$ of type 1 such that $l=\calH(x)$. As in \cite{berihes} we will denote the closed disc of radius $r$ and with center at $c$ by $E(c,r)=E_k(c,r)$. Then $E_n=E(c_n,r_n)$, where $c_n=\sum_{i=0}^{n-1}\pi_ia_i$ and $r_n=|\pi_na_n|$. Note that $E_n$ is defined over $k_n$ by Krasner's lemma, see \cite[Lemma~3.1.3(ii)]{insepunif}. So, $\cup_n k_n\subset\kappa(x)$ and this guarantees that $l\subseteq\calH(x)$.
\end{rem}

\subsubsection{Type 4 case: almost tame extensions}
Similarly to the case of extensions of type 1, we will construct examples using Krasner's lemma and the geometric interpretation is that we will find a sequence of nested discs $E_n=E(c_n,r_n)$ whose intersection will be a point of type 4. This case is subtler as we should guarantee that $\lim_n r_n=r>0$, and for this we will need a couple of technical lemmas that provide some control on the radii. We do not try to make the bounds as tight as possible. Set $|\kcirccirc|=\sup_{c\in\kcirccirc}|c|$. Thus, $|\kcirccirc|=1$ if the group $|k^\times|$ is dense, $|\kcirccirc|$ equals to the absolute value of the uniformizer if the valuation is discrete, and $|\kcirccirc|=0$ if the valuation is trivial.

\begin{lem}\label{disclem1}
Assume that $l/k$ is generated by an element $\alp\in\lcirc$ and let $r<r_\spl(\alp/k)|\kcirccirc|$. Then any $k$-split disc $E$ contains an $l$-split disc $E'$ whose radius in $E$ is larger than $r$.
\end{lem}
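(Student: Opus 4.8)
The statement asks: with $l = k(\alp)$, $\alp \in \lcirc$, and $r < r_\spl(\alp/k)\cdot|\kcirccirc|$, any $k$-split disc $E$ contains an $l$-split disc $E'$ whose radius in $E$ exceeds $r$. The plan is to first reduce to a normalized situation, then exhibit $E'$ explicitly by shrinking $E$ around a point that ``sees'' $\alp$ clearly.

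First I would observe that since $E$ is a $k$-split disc, it has a $k$-rational center $c$ and a radius $\rho = |E|$ (the radius computed in $E$ itself is $1$, so ``radius in $E$'' means the relative size — I will work with the intrinsic radius $\rho$ and a $k$-point $c \in E(k)$). The content to prove is that inside $E$ there is a smaller disc which is $l$-split and whose relative radius in $E$ is $> r$; equivalently, a disc $E' = E(c', \rho')$ with $c' \in l$, $E'$ split over $l$, $c' \in E$, and $\rho'/\rho > r$. Since $|\kcirccirc| \le 1$, the quantity $r$ is $< r_\spl(\alp/k) \le 1$, so this is a genuine shrinking. The natural candidate for $c'$ is $\alp$ itself, rescaled: if $\alp \in E$ already we could try $E' = E(\alp, \rho')$. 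But $\alp$ need not lie in $E$, so I would instead translate the picture. Pick $\pi \in \kcirccirc$ with $|\pi|$ close to $|\kcirccirc|$ (if the valuation is discrete take $\pi$ a uniformizer; if dense take $|\pi|$ as close to $1$ as needed; the trivial case is vacuous since then $|\kcirccirc| = 0$ and there is no admissible $r$). Set $\beta = c + \pi(\alp - a)\rho$ for a suitable $k$-point $a$, arranging that $\beta \in E$ and that $\beta$ generates $l$ over $k$ (this is automatic since $\alp = a + \pi^{-1}\rho^{-1}(\beta - c)$ and the scaling factors are in $k$).

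The key step is then: the disc $E' = E(\beta, r'\rho)$ is $l$-split whenever $r' < r_\spl(\beta/l)/\text{(something)}$, and the splitting radius $r_\spl(\beta/l)$ is controlled by $r_\spl(\alp/k)$ under the affine rescaling by $\pi\rho$. Concretely, the $l$-conjugates of $\beta$ are $c + \pi\rho(\alp^{(i)} - a)$ where $\alp^{(i)}$ runs over the $k$-conjugates of $\alp$, so the minimal distance between $\beta$ and its proper conjugates is exactly $|\pi\rho|\cdot r_\spl(\alp/k)$. By the Krasner-type criterion \cite[Lemma~3.1.3(i)]{insepunif}, a disc centered at $\beta$ of relative radius $< r_\spl(\beta/l) = |\pi|\rho\, r_\spl(\alp/k)/\rho = |\pi|\, r_\spl(\alp/k)$ — wait, I must be careful about whether the splitting radius is an absolute distance or a relative one; I would state it as: $E(\beta, s)$ with $s < |\pi|\, r_\spl(\alp/k)$ (absolute radius) is a split $l$-disc. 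Its relative radius in $E$ is $s/\rho$; I need this $> r$, i.e. I need to be able to choose $s$ with $r\rho < s < |\pi|\rho\, r_\spl(\alp/k)$, which is possible precisely because $r < r_\spl(\alp/k)\cdot|\kcirccirc|$ and $|\pi|$ can be taken arbitrarily close to $|\kcirccirc|$ (in the dense-value case), or equal to it (discrete case). Choosing such an $s$ and setting $E' = E(\beta, s)$ finishes the proof.

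The main obstacle I anticipate is the bookkeeping around the factor $|\kcirccirc|$: it enters precisely because the center $\beta$ must be produced as a $k$-integral combination landing inside $E$, and the ``cost'' of rescaling $\alp$ into $E$ by an element of $\kcirccirc$ is exactly a factor of $|\kcirccirc|$ in the available splitting radius. Getting the inequalities to chain correctly — and handling the boundary case where $|k^\times|$ is discrete (so $\pi$ is a genuine uniformizer and $|\pi| = |\kcirccirc|$ exactly, no limiting needed) versus dense (where one approximates) — is the only delicate point; the rest is the standard Krasner-disc dictionary of \cite[Lemma~3.1.3]{insepunif}.
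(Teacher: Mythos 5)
Correct, and essentially the paper's own approach seen from the inverse coordinate change: the paper normalizes $E$ to $E(0,s)$ with $1\le s<|\pi|^{-1}$ and then takes $E'=E(\alp,s')$ for $s'\in(r/|\pi|,r_\spl(\alp/k))$, whereas you keep $E$ fixed and rescale $\alp$ into it, with the same choice of $\pi$, the same Krasner criterion from \cite[Lemma~3.1.3(i)]{insepunif}, and the same chain of inequalities. One small slip to fix: in $\beta=c+\pi(\alp-a)\rho$ the quantity $\rho$ is a real number, not a field element, so one should instead scale by some $b\in k^\times$ with $|b|$ slightly below $\rho$ and then track the extra factor $|b|/\rho<1$ --- the paper's normalization of $E$ to near-unit radius is precisely what avoids this bookkeeping.
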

\begin{proof}
Fix $\pi\in\kcirccirc$ such that $r<r_\spl(\alp/k)|\pi|$. Choosing an appropriate coordinate on $E$ we can assume that $E=E(0,s)$, where $1\le s<|\pi|^{-1}$. Finally, choose $s'\in(r/|\pi|,r_\spl(\alp/k))$ and set $E'=E(\alp,s')$. Then $E'$ is $l$-split by Krasner's lemma, and the radius of $E'$ in $E$ is $s'/s>(r/|\pi|)|\pi|=r$, as required.
\end{proof}

\begin{lem}\label{disclem}
Assume that $l/k$ is a finite almost tame extension and let $r<|\kcirccirc|^3$. Then there exists a finite extension $l'/l$ such that any $k$-split disc $E$ contains an $l'$-split disc $E'$ whose radius in $E$ is larger than $r$.
\end{lem}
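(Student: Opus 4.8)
The plan is to reduce the claim to Lemma~\ref{disclem1}. Concretely, I would produce a finite extension $l'/l$ such that $l'/k$ is generated by a single element $\alp\in(l')^\circ$ with $r_\spl(\alp/k)\ge|\kcirccirc|^2$. Granting this, since $r<|\kcirccirc|^3=|\kcirccirc|^2\cdot|\kcirccirc|\le r_\spl(\alp/k)\cdot|\kcirccirc|$, Lemma~\ref{disclem1} applied to the extension $l'/k$ and the element $\alp$ shows that any $k$-split disc $E$ contains an $l'$-split disc $E'$ whose radius in $E$ is larger than $r$, which is exactly the assertion. Note also that $l/k$ is automatically separable, since a non-trivial purely inseparable subextension would make $\Omega_{\lcirc/\kcirc}$ non-torsion, contradicting almost tameness.

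To construct $\alp$ I would use two facts. First, an almost tame extension has an almost trivial different: almost tameness means that $\kcirccirc$ annihilates $\Omega_{\lcirc/\kcirc}$, whence $\mathfrak d_{l/k}\supseteq\kcirccirc\lcirc$, i.e. $|\mathfrak d_{l/k}|\ge|\kcirccirc|$ (see \cite{insepunif}, \cite{Gabber-Ramero}), and moreover the residue extension is separable. Second, after replacing $l$ by a suitable finite extension $l'/l$ which may again be taken almost tame, the ring $(l')^\circ$ becomes monogenic over $\kcirc$, say $(l')^\circ=\kcirc[\alp]$; this is the standard structure theory of valued fields, using that the residue extension stays separable (hence monogenic, by the primitive element theorem) and that the value group extension can be made cyclic by a tame enlargement. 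Applying the first fact to both layers $l/k$ and $l'/l$ and the tower formula for the different, we obtain $|\mathfrak d_{l'/k}|\ge|\kcirccirc|^2$.

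It remains to pass from the different to the splitting radius. Since $(l')^\circ=\kcirc[\alp]$, the different $\mathfrak d_{l'/k}$ is the principal ideal $(f'(\alp))$, where $f$ is the minimal polynomial of $\alp$ over $k$; hence $|f'(\alp)|=|\mathfrak d_{l'/k}|\ge|\kcirccirc|^2$. Writing $f=\prod_i(X-\alp_i)$ with $\alp_1=\alp$, all conjugates $\alp_i$ lie in the ring of integers of a finite extension of $k$, so $|\alp-\alp_i|\le 1$ and therefore $|f'(\alp)|=\prod_{i\ge 2}|\alp-\alp_i|\le\min_{i\ge 2}|\alp-\alp_i|=r_\spl(\alp/k)$. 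Thus $r_\spl(\alp/k)\ge|\kcirccirc|^2$, and the reduction described above applies, finishing the proof.

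The main obstacle is the monogenization step: one must show that a finite almost tame extension $l/k$ embeds in a finite extension $l'/k$ with $(l')^\circ$ monogenic over $\kcirc$ while retaining the bound $|\mathfrak d_{l'/k}|\ge|\kcirccirc|^2$. The exponent $3$ in the hypothesis is exactly what is needed to absorb the three independent losses of a factor $|\kcirccirc|$: one from the estimate $\mathfrak d_{l/k}\supseteq\kcirccirc\lcirc$, one from the almost tame layer $l'/l$ introduced during monogenization, and one from Lemma~\ref{disclem1} itself. A more computational alternative, splitting $l/k$ into a tower of prime-degree almost tame extensions, runs into the difficulty that the radii may degrade by an amount growing with the length of the tower, so staying within the allowed bound that way requires extra care.
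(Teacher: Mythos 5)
Your plan has a genuine gap at the monogenization step, and it is not one that can be patched: when the value group of $k$ is dense (the case of primary interest here, since almost tame but non-tame extensions require a dense value group), the ring of integers $(l')^\circ$ of a finite extension is essentially never monogenic over $\kcirc$ --- it is typically not even a finitely generated $\kcirc$-module. Concretely, for a ramified quadratic extension $l=k(\sqrt a)$ with $|a|^{1/2}\notin|k^\times|$, one checks directly that $\lcirc=\kcirc\oplus\{y\in k:|y|<|a|^{-1/2}\}\cdot\sqrt a$, and for any $\beta=x+y\sqrt a$ the ring $\kcirc[\beta]$ equals $\kcirc\oplus\kcirc y\cdot\sqrt a$, a proper submodule since $|y|<|a|^{-1/2}$ strictly. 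The monogenicity you invoke as ``standard structure theory'' is Serre's theorem for \emph{complete DVRs} with separable residue extension; it does not transfer to the non-Noetherian setting. This matters because the chain $|f'(\alp)|=|\mathfrak d_{l'/k}|\ge|\kcirccirc|^2$ is exactly where monogenicity is used: without $(l')^\circ=\kcirc[\alp]$ one only has $(f'(\alp))\subseteq\mathfrak d_{l'/k}$, i.e.\ $|f'(\alp)|\le|\mathfrak d_{l'/k}|$, which is the wrong direction for bounding $r_\spl(\alp/k)$ from below. Likewise the separability of $\tilde l/\tilde k$ (your primitive-element step) is not automatic for almost tame extensions over a densely valued base.

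The paper takes what you dismiss as the ``more computational alternative'': after a further tame extension it splits $l'/k$ into a tower $k=k_0\subset k_1\subset\dots\subset k_n=l'$ of unramified, tame prime-degree, and wild degree-$p$ layers, and applies Lemma~\ref{disclem1} layer by layer. At each step one only needs the \emph{field} $k_i$ to be generated over $k_{i-1}$ by a single $\alp_i\in k_i^\circ$ with $r_\spl(\alp_i/k_{i-1})$ close to $1$ --- a much weaker condition than ring monogenicity, and one that is established by a separate argument in each of the three cases (the wild degree-$p$ case being the delicate one, using that almost tame degree-$p$ extensions are immediate and almost unramified). The degradation of the radii along the tower, which you correctly identified as a concern, is controlled by the simple observation $\prod_{i=1}^n|k_i^{\circ\circ}|\ge|\kcirccirc|$ in the discrete case (and is a non-issue in the dense case where $|k_i^{\circ\circ}|=1$). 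So the factor $|\kcirccirc|^3$ in the hypothesis absorbs the telescoping loss, not the three sources you list. Your high-level plan of ``reduce to Lemma~\ref{disclem1} via a good generator'' is the right instinct, but a single global generator with the required splitting radius is not available, and the inductive tower is how one works around that.
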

\begin{proof}
Set $p=\cha(\tilk)$. By the classical ramification theory, there exists a finite tame extension $l'/l$  such that $l'/k$ splits into the composition of $n$ extensions $k_{i+1}/k_i$, where $k_0=k$ and $k_n=l'$, of one of the following type: (a) unramified, (b) $k(a^{1/q})/k$ for a prime $q\neq p$ such that $|a|^{1/q}\notin|k^\times|$, (c) wildly ramified of degree $p$. We will prove that for any number $r<|k_i^{\circ\circ}|$ the extension $k_i/k_{i-1}$ is generated by an element $\alp_i\in k^\circ_i$ such that $r_\spl(\alp_i/k_{i-1})>r$. Assuming this claim, the assertion of the lemma follows by induction on $n$ with Lemma~\ref{disclem1} providing the induction step. This is obvious if the group $|k^\times|$ is dense, and if the valuation is discrete we use the simple fact that $\prod_{i=1}^n|k_i^{\circcirc}|\ge|\kcirccirc|$.

It remains to establish the claim. For shortness, we denote the extension $k_i/k_{i-1}$ by $L/K$. In case (a), the extension $\tilL/\tilK$ is separable, hence it is generated by a single element $\tilalp$. Any lifting $\alp\in\Lcirc$ of $\tilalp$ is a generator of $L/K$ with $r_\spl(\alp/K)=1$.

In case (b), we can replace $a$ by $a^nb^q$, where $1\le n\le q-1$ and $b\in K^\times$. In this way we can achieve that $a$ is a uniformizer in the discrete-valued case, and $|a|$ is arbitrarily close to $1$ if $|k^\times|$ is dense. In particular, we can achieve that $r^q<|a|<1$ and hence $\alp=a^{1/q}$ satisfies $r<|\alp|=r_\spl(\alp/K)$.

In case (c), the extension $L/K$ is immediate by \cite[Lemma~5.5.9]{Temkintopforms}. In particular, $|K^\times|$ is dense, and hence $L/K$ is almost unramified by \cite[Theorem~5.5.11]{Temkintopforms}. Pick any $x\in L\setminus K$. Then $L=K(x)$, and it suffices to prove that $r_\spl(x/K)=\inf_{c\in K}|x-c|$. Indeed, replacing $x$ by some $x-c$ we can then achieve that $r_\spl(x/K)>r|x|$, and, since $|K^\times|$ is dense, we can take $\alp=x/a$ for $a\in K$ with $|x|<|a|<r_\spl(x/K)/r$.

First, we claim that the infimum $s=\inf_{c\in K}|x-c|$ is not achieved. Indeed, if it is achieved for $c\in K$ then either $|x-c|\notin|K^\times|$ or $|x-c|=|a|$ for $a\in K$ and then $\tilb\notin\tilK$ for $b=(x-c)/a$. In any case, the extension $L/K$ is defectless and hence it is tame by \cite[Lemma~5.5.9]{Temkintopforms}, contradicting the assumption of (c).

Now, consider an affine line with a coordinate $t$ and let $z$ be the maximal point of $E=E(x,s)$ and $K'=\calH(z)$. The norm on $k[t]$ induced from $K'$ is the maximal norm such that $|x-c|=|t-c|$ for any $c\in K$. By Krasner's lemma, $E$ is not defined over $L$, and it follows easily that $L$ is not contained in $K'$. Thus, $L'=L\otimes_KK'$ is a field extension of $K'$ of degree $p$. Furthermore, $L'=K'(x)$ and $s=\inf_{c\in K'}|x-c|$ is achieved for $c=t$, in particular, $L'/K'$ is defectless. Since $$\Omega_{(L')^\circ/(K')^\circ}=\Omega_{\Lcirc/\Kcirc}\otimes_{\Kcirc}\Lcirc=0$$ \cite[Lemma~5.5.9]{Temkintopforms} implies that $L'/K'$ is tame and hence even unramified. Then the argument from (a) shows that $r_\spl(x/K')=s$, but clearly $r_\spl(x/K')=r_\spl(x/K)$.
\end{proof}

\begin{cor}\label{type4cor}
Assume that $k$ is a non-trivially valued analytic field and $l/k$ is a countably generated, infinite, almost tame, algebraic extension. Then there exists a primitive extension $K/k$ of type 4 such that $l\subset K$.
\end{cor}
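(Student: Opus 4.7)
The strategy is to adapt the geometric construction from Remark~\ref{type1rem}, producing a type~4 point rather than a type~1 point: the nested discs should have a positive limit radius and empty intersection in $\wh{k^a}$. Write $l=\bigcup_{n\ge 0}k_n$ with $k_0=k$ and $k_{n+1}=k_n(a_{n+1})$ a finite almost tame extension. I will construct inductively finite almost tame extensions $k'_n\supseteq k_n$ of $k$ and $k'_n$-split closed discs $E_n=E(c_n,r_n)$ with $E_{n+1}\subsetneq E_n$; the resulting point $x\in\mathbb{A}^{1,\mathrm{an}}_k$ then yields $K=\calH(x)$ containing $l$ via $l\subseteq\bigcup_n k'_n\subseteq\kappa(x)\subseteq K$ as in Remark~\ref{type1rem}.

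At the inductive step I would apply Lemma~\ref{disclem} to the almost tame extension $k'_n(a_{n+1})/k'_n$ with a threshold $\rho_n<|(k'_n)^{\circ\circ}|^3$, producing a finite tame extension $k'_{n+1}/k'_n(a_{n+1})$ and a $k'_{n+1}$-split sub-disc of $E_n$ of relative radius exceeding $\rho_n$. Since translating by an element of $k'_n$ preserves the $k'_{n+1}$-split property, the center $c_{n+1}$ can be placed at any chosen distance in $(r_{n+1},r_n]$ from $c_n$; I take $|c_{n+1}-c_n|=r_n$. Choosing $\rho_n$ with $\sum(1-\rho_n)<\infty$ ensures $\prod\rho_n>0$ and hence $r:=\lim r_n>0$. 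When $|k^\times|$ is dense, $|(k'_n)^{\circ\circ}|=1$ and the constraint on $\rho_n$ is vacuous; in the discrete case $|(k'_n)^{\circ\circ}|^3=|\pi|^{3/e_n}$, and I would first enlarge the tower $\{k'_n\}$ by tame ramified extensions (available inside the almost tame closure of $l$) so that the ramification indices $e_n\to\infty$ quickly enough to accommodate summable $\rho_n$.

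The main obstacle is showing that $\bigcap_n E_n=\emptyset$ in $\wh{k^a}$, ruling out types~2 and~3 for $x$. Setting $\pi_n:=c_{n+1}-c_n$ with $|\pi_n|=r_n>r_{n+1}$, ultrametricity forces any $c\in\wh{k^a}\cap\bigcap_n E_n$ to satisfy $|c-c_n|=r_n$ for every $n$, so $c$ would behave like a ``formal limit'' in $\wh{k^a}$ of the strongly divergent series $c_0+\sum_n\pi_n$. The hard part is to use the remaining freedom in the choice of $c_{n+1}$ within $E_n$ to guarantee that no such $c$ exists: concretely, that the successive increments $\pi_n$, whose residues at scale $r_n$ encode genuinely new algebraic data from the infinite almost tame tower $l/k$, cannot be simultaneously matched by any single element of the algebraic closure $\wh{k^a}$ of $\wh k$. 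Reconciling this residue-independence requirement with the splitting constraints provided by Lemma~\ref{disclem} is the subtle core of the argument.
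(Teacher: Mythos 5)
Your construction via Lemma~\ref{disclem} and Remark~\ref{type1rem} is the right one, and the radius bookkeeping in the first two paragraphs matches the paper. But the step you flag as ``the subtle core of the argument'' — showing $\bigcap_n E_n$ misses $\wh{k^a}$ — is not subtle at all, and the paper disposes of it in one sentence. You already recorded the key fact in your first paragraph: since each $E_n$ is split over a finite extension $k'_n\supseteq k_n$, Krasner's lemma gives $k_n\subseteq\kappa(y)$ for every point $y\in E_n$, in particular for every classical point $y=y_c$ with $c\in k^a$. Hence if some $c\in k^a$ lay in all the $E_n$, then $l=\bigcup_n k_n\subseteq k(c)$, contradicting $[l:k]=\infty$. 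By density of $k^a$ in $\wh{k^a}$ together with $\lim r_n=r>0$, this already forbids any $c\in\wh{k^a}$ from lying in all the $E_n$: such a $c$ could be perturbed to some $c'\in k^a$ with $|c-c'|<r\le r_n$, and then $c'\in\bigcap_n E_n$ as well. So the intersection is a single point, and because the radii are bounded below it is of type~4.

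In short, the gap in your proposal is that you go hunting for a ``residue-independence'' argument and extra freedom in the choice of centers (your normalization $|c_{n+1}-c_n|=r_n$ plays no role in the paper and is not needed), when the only thing actually required is the trivial observation that an \emph{infinite} algebraic extension of $k$ cannot sit inside the finite extension $k(c)$ for any $c\in k^a$. The infiniteness hypothesis on $l/k$ is precisely what rules out the classical points, and no further tuning of the construction is necessary.
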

\begin{proof}
Let $l=k(a_0,a_1,\dots)$ and set $l_n=k(a_0\.a_n)$. Then applying Lemma~\ref{disclem} inductively we can find a nested sequence of discs $E_0\supset E_1\supset\dots$ such that $E_n$ is $k_n$-split and the radius of $E_n$ in $E_{n-1}$ is bounded from below by a number close enough to $|k_{n-1}^{\circcirc}|^3$. In particular, for a fixed number $0<r<|\kcirccirc|^6$ we can choose $E_i$ such that the radii of $E_n$ in $E_0$ are bounded from below by $r$. For any point $x$ in the intersection $\cap_nE_n$ we have that $l\subset\kappa(x)$. Since $l/k$ is infinite, $x$ cannot be a $k^a$-point, and hence $\cap_nE_n$ is a single point $x$. Since the radii of $E_n$ do not tend to zero, $x$ is of type 4, and hence $K=\calH(x)$ is as required.
\end{proof}

\begin{rem}
The assumption that $l/k$ is infinite is essential for the construction. In fact, it is the assumption that guarantees that $k$ possesses extensions of type 4, in particular, $\whka$ is not spherically complete.
\end{rem}

\subsubsection{Type 4 case: deeply ramified extensions}
In the case of not almost tame extensions, the splitting radius can be small. Roughly speaking, the smaller the different is the smaller the splitting radius is. For example, $r=r_\spl(x/K)$ does not exceed $s=\inf_{c\in K}|x-c|$ for any $x\in L\setminus K$, and if $[L:K]=p$ and $K=K^t$ then $\Lcirc=\colim_i\Kcirc[a_ix+b_i]$, where $a_i,b_i\in\Kcirc$ and $|a_ix+b_i|\le 1$, by \cite[Proposition~6.3.13(i)(d)]{Gabber-Ramero}, and hence $\delta_{L/K}=(s/r)^{p-1}$ by \cite[Corollary~4.4.8]{radialization}.

Nevertheless, we are going to construct examples of type 4 extensions $K/k$ that contain algebraic subextensions $l/k$ of zero different. Moreover, we show that such examples are also ubiquitous.

\begin{theor}\label{deepramlem}
Assume that $k$ is a non-trivially valued analytic field with $p=\cha(\tilk)>0$ and $t\in\kcirc$ an element such that $d_{\kcirc}(t)$ is not infinitely divisible. Then there exists a primitive extension $K/k$ of type 4 and an algebraic subextension $l/k$ of $K/k$ such that $d_{\lcirc}(t)$ is infinitely divisible.
\end{theor}
\begin{proof}
Fix an increasing sequence of non-zero natural numbers $d_1,d_2\.$ converging to $\infty$. Replacing $t$ with $p^{-n}t$ in the mixed characteristic case we can assume that $|p|<|t|\le 1$. Also, fix $\pi\in\kcirccirc$ such that $|p|\le |\pi|$. Now, we inductively set $t_0=t$, $t_{n+1}$ is a root of $f_n(x)=x^{p^{d_n}}-\pi x-t_n$, $k_0=k$, $k_{n+1}=k_n(t_{n+1})$, and $l=\cup_n k_n$. Note that $dt_n=a_ndt_{n+1}$, where $a_n=p^{d_n}t_{n+1}^{p^{d_n}-1}-\pi\in k_{n+1}$ satisfies $|a_n|=|\pi|$. Therefore, each $dt_n$, including $dt$, is infinitely divisible in $\Omega_{\lcirc}$. In particular, the different of the extension $l/k$ is zero, and hence $[l:k]=\infty$ by \cite[Theorem~5.2.11(iii)]{Temkintopforms}.

Next, let us compute $r_\spl(t_{n+1}/k_{n})$. If $\cha(k)=p$ then the differences between $t_{n+1}$ and its conjugates are of the form $\pi^{1/(p^{d_n}-1)}$, hence $r_\spl(t_{n+1}/k_{n})=|\pi|^{1/(p^{d_n}-1)}$. Moreover, inspecting the situation modulo $p$, one obtains that the same formula for $r_\spl(t_{n+1}/k_{n})$ holds in the mixed characteristic case as well. Since $d_n$ tend to infinity, we obtain that $\prod_{n=1}^\infty r_\spl(t_{n+1}/k_{n})>0$, and then using Lemma~\ref{disclem1} one constructs a nested sequence of discs $E_n$ such that $E_n$ is $k_n$-split and the radii of $E_n$ tend to a positive number. By the same argument as in the proof of Corollary~\ref{type4cor}, $\cap_nE_n$ is a single point $x$ of type 4 and $K=\calH(x)$ contains $l$.
\end{proof}

\begin{rem}\label{deepramrem}
If $d_{\kcirc}(t)$ is not infinitely divisible, one can also construct examples of extensions $K/k$ of type 4 such that $k$ is algebraically closed in $K$ and $d_{\Kcirc}(t)$ is infinitely divisible. If $\cha(k)=p$, such an example is constructed in the proof of \cite[Theorem~1.2]{untilt}.
\end{rem}

\begin{rem}
Assume that $l/k$ is a countably generated, infinite, separable extension. Similarly to Theorem~\ref{type1exam} one may wonder whether any such $l/k$ embeds into a primitive extension $K/k$ of type 4. Probably this is true. At least, by Corollary~\ref{type4cor} and Theorem~\ref{deepramlem} this is so for a large class of extensions.
\end{rem}

\subsubsection{Spherically complete fields}
Our results on extensions of type 4 imply a somewhat surprising consequence on the structure of spherically complete fields.

\begin{theor}\label{sphth}
If $\whka$ is spherically complete then $\whka=k^a$ and $[k^a:k]\le 2$.
\end{theor}
\begin{proof}
If $k$ is not almost tame then it is not deeply ramified by \cite[Theorem~5.5.15]{Temkintopforms}], and hence $\Omega_{\kcirc}$ is not infinitely divisible by the equivalence (i)$\Longleftrightarrow$(v) in \cite[Proposition~6.6.6]{Gabber-Ramero}. Thus, $k$ possesses a primitive extension $K$ of type 4 by Theorem~\ref{deepramlem}. In particular, $\wh{k^aK}/\whka$ is an extension of type 4, and hence $\whka$ is not spherically complete.

Assume now that $k$ is almost tame. Then the almost tame extension $k^a/k$ is finite because otherwise Corollary~\ref{type4cor} would imply that $k$ possesses extensions of type 4, which, as we shown above, is impossible. Therefore, $k^a$ is already complete and $[k^a:k]\le 2$ by a theorem of E. Artin.
\end{proof}

\subsection{Non-additivity of topological transcendence degree}

\subsubsection{Composition of two primitive extensions}
Consider now a tower $l=\wh{k(x)}$ and $L=\wh{l(y)}$ of two primitive extensions of type different from 1, and assume for simplicity that $k=k^a$. Then $L$ is a primitive extension of $K=\wh{k(y)}$, and one may naturally expect that $L/K$ is topologically transcendental. If this happens then the set $\{x,y\}$ is topgebraically independent over $k$ and hence $\topdeg(L/k)=2$, as one might expect. However, if $\cha(\tilk)>0$ then the pathological situation with $L\subseteq\whKa$ can happen. In particular, the topgebraic dependency turns out to be asymmetric: $x\in\wh{k(y)^a}$ but $y\notin\wh{k(x)^a}$.

\begin{theor}\label{mainpathology}
Assume that $k$ is an analytic field of positive residual characteristic and $l=\wh{k(x)}$ is a topologically transcendental primitive extension. Then $l$ possesses a primitive extension $L=\wh{l(y)}$ of type 4 such that $L\subseteq\whKa$ for $K=\wh{k(y)}$ and $|x-y|<r_k(x)$. In particular, $K$ is $k$-isomorphic to $l$, the extensions $L/l$, $l/k$ and $L/k$ are of topological transcendence degree one, and the topological transcendence degree is not additive for the tower $L/l/k$.
\end{theor}
\begin{proof}
By \S\ref{difsec}, $\whd_{l/k}(x)\neq 0$ and hence $d_{\lcirc/\kcirc}(x)$ is not infinitely divisible by Lemma~\ref{kahlerlem}. In particular, $d_{\lcirc}(x)$ is not infinitely divisible. By Theorem~\ref{deepramlem} there exists an extension $L=\wh{l(y)}$ of type 4 such that $d_{\Lcirc}(x)$ is infinitely divisible. Then $d_{\Lcirc/\Kcirc}(x)$ is infinitely divisible, and hence $\whd_{L/K}(x)=0$ by Lemma~\ref{kahlerlem}. In particular, $L/K$ is of type 1 by \S\ref{difsec}, that is, $L\subseteq\whKa=\wh{k(y)^a}$.

The assertions about the topological degrees are clear. It remains to note that we can replace $y$ by any element $z\in k(y)^a$ transcendental over $k$, hence we can achieve that $|y-x|<r_k(x)$ and then $K\toisom l$ by Lemma~\ref{primdeform}.
\end{proof}

\begin{rem}
(i) The critical input here is an example of a primitive extension $L=\wh{l(y)}$ which makes $dx$ infinitely divisible in $\Omega_{\Lcirc}$ and hence kills $\whd x$ in $\whOmega_L$. In particular, $\Omega_{\Lcirc/\lcirc}$ should have infinitely divisible torsion elements and $L/l$ should have zero different.

(ii) This theorem extends \cite[Theorem~1.2]{untilt}, but the argument is similar. The novelty is that the construction of a ``very wildly ramified" extension $L/l$ in this paper is based on Theorem~\ref{deepramlem} and applies to the most general case.
\end{rem}

\subsubsection{$k$-endomorphisms of $\wh{k(x)^a}$}\label{endsec}
Now we can reprove the theorem of Matignon-Reversat, \cite[Th\`eor\'eme~2]{matignon-reversat}.

\begin{theor}\label{endth}
Assume that $k$ is an analytic field of positive residual characteristic and $K=\wh{k(x)^a}$ is a topologically transcendental extension of $k$. Then there exists a non-invertible $k$-endomorphism of $K$ which is a quasi-perturbation of $\Id_K$.
\end{theor}
\begin{proof}
It suffices to construct a $\whka$-endomorphism, so we can assume that $k=k^a$. By Theorem~\ref{mainpathology}, $l=\wh{k(x)}$ possesses an extension $\wh{l(y)}$ of type 4 such that $L=\wh{l(y)^a}$ equals to $\wh{k(y)^a}$ and there is a perturbation of the identity $\phi_0\:\wh{k(y)}\toisom\wh{k(x)}$ sending $y$ to $x$. Clearly, $\phi_0$ extends to topgebraic closures providing an endomorphism $\phi\:L\to K$. The check that $\phi$ is a quasi-perturbation of the identity is easy, and we skip it. Since $K\subsetneq L$, the restriction of $\phi$ onto $K$ is a required non-invertible $k$-endomorphism of $K$.
\end{proof}

\begin{rem}\label{endrem2}
(i) We deduced the theorem of Matignon-Reversat from Theorem~\ref{mainpathology}, but, in fact, they are equivalent. Indeed, if $\phi$ is a non-invertible endomorphism of $\wh{k(y)^a}$ then for $x=\phi(y)$ we have that $x\in\wh{k(y)^a}$ but $y\notin\wh{k(x)^a}$. Note that $\{x\}$ is a maximal topgebraically independent set in $\wh{k(y)^a}$, which is not topgebraically generating.

(ii) Another corollary of Theorem~\ref{endth} is the following observation of Matignon and Reversat: $\wh{k(x)}$ contains infinitely many distinct algebraically closed analytic extensions of $k$. For example, one can take $K_n=\phi^n(K)$.

(iii) The argument in the theorem implies that for any element $y\in\wh{k(x)^a}$ with $|x-y|<\alp r_k(x)$, where $\alp\in(0,1)$, there exists a unique isomorphism $\phi_y\:\wh{k(x)^a}\toisom\wh{k(y)^a}$ taking $x$ to $y$, and this $\phi_y$ is a quasi-perturbation of the identity. We know from Remark~\ref{deformrem}(i) that if $\phi_y$ is non-invertible then it is not a perturbation of the identity. In fact, if $\cha(\tilk)>0$ then $\phi_y$ is never a perturbation of the identity because $$\lim_{n\to\infty}\left|x^{1/p^n}-y^{1/p^n}\right|/\left|x^{1/p^n}\right|=1.$$ On the other hand, it is easy to see that $\phi_y|_L$ is an $\alp$-perturbation of $\Id_L$ for any tame (even almost tame) algebraic extension $L/\wh{k(x)}$, regardless of $p=\cha(\tilk)$. In particular, if $p=0$ then $\phi_y$ is an $\alp$-perturbation of the identity.
\end{rem}

\subsubsection{Naive multitype}\label{multitype}
Theorem \ref{mainpathology} can be translated to Berkovich spaces as follows. Consider the $k$-analytic affine plane $\bfA^2_k$ with coordinates $x,y$ and let $z$ be the point corresponding to the norm that $L$ induces on $k(x,y)$. In particular, $L=\calH(z)$. The projections $z_1,z_2$ of $z$ onto the affine lines have completed residue fields $\calH(z_1)=l$ and $\calH(z_2)=K$. They are of the same type over $k$, which can be 2, 3 or 4. On the other hand, the types of $z$ in the fibers correspond to the types of the extensions $L/l$ and $L/K$, which are 4 and 1, respectively. In particular, the naive definition of multitype of $z$ (see \S\ref{multisec}) is incorrect.

\subsubsection{Multitype}
The correct notion of multitype is introduced in Definition~\ref{multirem}. Given a $k$-analytic space $X$ and a point $x\in X$ with $\dim_x(X)=d$, by a {\em multifibration} at $x$ we mean an analytic domain $X_d\subseteq X$ containing $x=x_d$ and a sequence of maps $X_d\to X_{d-1}\to\dots\to X_0$ such that $X_i$ is of dimension $i$ at the image $x_i$ of $x$ and the fiber of $X_{i+1}\to X_i$ over $x_i$ is a curve. It is easy to see that multifibrations at $x$ always exist: for a small enough $X_d$ one can find a morphism $X_d\to\bfA^d$ with finite fibers, and then $X_i=\bfA^i$ for $i<d$ will work.

\begin{theor}\label{multith}
Assume that $X$ is a $k$-analytic space and $x\in X$ is a point. Then,

(i) For any mutlifibration $X_d\to X_{d-1}\to\dots$, at least $n_4(x)$ points from the set $\{x_1\.x_d\}$ are of type 4 in the fiber.

(ii) There exists a multifibration such that precisely $n_4(x)$ points are of type 4.

(iii) If $\cha(\tilk)=0$ then the number of points of type 4 is $n_4(x)$ for any multifibration.
\end{theor}
\begin{proof}
First we note that for any $K$-analytic space $Y$ with a point $y\in Y$ the extension $\calH(y)/K$ is topologically finitely generated. In particular, it possesses a topological transcendence basis and satisfies the equality $\topdeg(\calH(y)/K)=\Topdeg(\calH(y)/K)$ by Theorem~\ref{basisth}. All extensions considered below are of this form, so we do not have to distinguish these two invariants, and then $\topdeg$ is subadditive by Lemma~\ref{subadditivelem}.

The invariants $\trdeg(K/k)$ and $\dim_\bfQ(|K^\times|/|k^\times|\otimes\bfQ)$ are additive in towers, hence the number of extensions of type 2 and 3 is $n_2(x)$ and $n_3(x)$, respectively. Furthermore, we always have that the number of points of type 2, 3 or 4 is at least $\topdeg(\calH(x)/k)$ by the subadditivity of $\topdeg$. This proves (i), and since $\topdeg$ is additive when the residue characteristic is zero, we also obtain (iii).

Let us prove (ii). Shrinking $X$ around $x$ we can assume that it is affinoid. Fix a topological transcendence basis $S$ of $\calH(x)/k$, and choose a perturbation $S'\subset\kappa(x)$ of $S$. Then $S'$ is topgebraically independent by Theorem~\ref{deformth} and hence a topological transcendence basis by Theorem~\ref{basisth}. So, replacing $S$ by $S'$ we can assume that it lies in $\kappa(x)$ and hence lifts to a subset $f_1\.f_n\subset\calO_{X,x}$, where $n=\topdeg(\calH(x)/k)$. Shrinking $X$ again we can assume that $f_1\.f_n$ are global functions and hence define a morphism $f\:X\to Y=\bfA^n_k$. Shrinking $X$ further we can easily extend $f$ to a multifibration $X\to X_{d-1}\to\dots$ such that $X_n=Y$. By the construction, $\calH(x)$ is topgebraic over $\calH(x_n)=\wh{k(S)}$, and hence the points $x_d,x_{d-1}\. x_{n+1}$ are of type 1 in the fibers. Thus, the points $x_n\.x_1$ are precisely the points of types 2, 3 and 4, as asserted.
\end{proof}

\subsection{Non-existence of a topological transcendence basis}\label{nobasissec}

\subsubsection{Spherically complete extensions}
Here is our first example of an extension that does not admit such a basis.

\begin{theor}\label{sphcor}
Assume that $K/k$ is an extension of analytic fields  such that $\whka$ is not spherically complete and $K$ is algebraically closed and spherically complete. If $k$ is of positive residual characteristic then the extension $K/k$ does not admit a topological transcendence basis.
\end{theor}
\begin{proof}
Assume, to the contrary, that $S$ is a topological transcendence basis. Choose an element $t\in S$ and set $l=\wh{k(S\setminus\{t\})}$. Then $L=\wh{l(t)}$ is a primitive extension of $l$ of type different from 1 and $K=\whLa$. By \S\ref{difsec}, $\whd_{L/l}(t)\neq 0$, hence $d_{\Lcirc/\lcirc}(t)$ is not infinitely divisible by Lemma~\ref{kahlerlem}. So, $L$ possesses a primitive extension $F$ of type 4 by Theorem~\ref{deepramlem}, and then $\wh{L^aF}/K$ is an extension of type 4, and hence $K$ is not spherically complete. A contradiction.
\end{proof}

\begin{question}
What happens if $\cha(\tilk)=0$? For example, what happens for spherically complete extensions of $\bfC((t))$?
\end{question}

\subsubsection{Transcendence degree one}
One can show that the topological transcendence degree in the previous example is infinite. In the following example it is finite.

\begin{theor}\label{nobasisth}
Assume that $k$ is an analytic field of positive residual characteristic. Then the class of extensions $K_i$ of $k$ such that $\topdeg(K_i/k)=1$ contains maximal objects with respect to inclusion, and if $K/k$ is such a maximal extension then $K/k$ does not possess a minimal topgebraically generating set. In particular, $\Topdeg(K/k)$ is infinite and $K/k$ does not admit a topological transcendence basis.
\end{theor}
\begin{proof}
Assume that $\{K_i\}_{i\in I}$ is a set of extensions of $k$ of topological transcendence degree one which is totally ordered with respect to inclusion. Then $\cup_{i\in I}K_i$ is a valued field and we claim that its completion $K$ is of topological transcendence degree 1. Indeed, if a set $\{x,y\}\subset K$ is topgebraically independent over $k$ then by Theorem~\ref{deformth} the same is true for any perturbation $\{x',y'\}$. By the density of $\cup_{i\in I}K_i$, we can achieve that $\{x',y'\}\subset K_i$ for some $i$, but this would contradict that $\topdeg(K_i/k)=1$.

The existence of a maximal $K$ would now follow from Zorn's lemma once one guarantees that $K_i$ can be chosen to be subsets of a fixed universe set. In other words, we need to establish a uniform bound on the cardinalities of $K_i$. For expository reasons we postpone this until Corollary~\ref{cardcor}.

Finally, let $S$ be a topgebraically generating set. First, we claim that $|S|>1$. Indeed, otherwise $K=\wh{k(x)^a}$ and then $K$ is $k$-isomorphic to its proper subfield by Theorem~\ref{endth}. Therefore, there exists a non-trivial extension $L/K$ such that $L$ is $k$-isomorphic to $K$, and this contradicts the maximality of $K$. Next, we note that for any pair of distinct elements $a,b\in S$, we can remove one of them from $S$ without loosing topgebraic generation. Indeed, $\topdeg(K/k)=1$ and hence either $a$ is topgebraic over $\wh{k(b)}$ and we can remove $a$, or $b$ is topgebraic over $\wh{k(a)}$ and we can remove $b$. All in all, the generating set $S$ is not minimal.
\end{proof}

\begin{rem}
Both examples are based on Zorn's lemma. In a sense they indicate that large extensions tend not to have a topological transcendence basis. One may still wonder if there exists a constructive (e.g. countable) construction of such an extension. We do not know the answer, but this naturally leads to the questions below.
\end{rem}

\begin{question}\label{topdeg1q}
Assume that $\cha(\tilk)>0$ and $K=\wh{k(x)^a}\neq\whka$. Let $\phi\:K\to K$ be a non-invertible $k$-endomorphism. Set $L=\wh{K_{-\infty}}$, where informally $K_{-\infty}=\cup_{n\in\bfN}\phi^{-n}(K)$. More precisely, $K_{-\infty}$ is the colimit of the diagram consisting of $K$ and the endomorphisms $\{\phi^n\}_{n\in\bfN}$, and on the practical level $K$ is the union of $\bfN$ copies $K_{-n}=\wh{k(x_n)}$ of $K$ and under the identification $K=K_n$ one has that $\phi(x_n)=x_{n-1}$. Is $\Topdeg(L/k)$ infinite? Does the answer depend on $\phi$? Probably, this is tightly related to the structure of the totally ordered set $\End_k(K)/\Aut_k(K)$, see Remark~\ref{subfieldsrem} below.
\end{question}

\subsubsection{The set $\End_k(K)/\Aut_k(K)$}
Assume that $\cha(\tilk)>0$ and $K=\wh{k(x)^a}$ with $x\notin\whka$. By the theorem of Matignon-Reversat, $K$ contains infinitely many distinct subfields of the form $\wh{k(t)^a}$. However, the fact that $\topdeg(K/k)=1$ by Theorem~\ref{mainineq} imposes a strong restriction on the set of such subfields. Slightly more generally, we have the following result.

\begin{theor}\label{subfieldsth}
Assume that $K/k$ is an extension of analytic fields such that $\topdeg(K/k)\le 1$. Then the set $S_{K/k}$ of all subfields of $K$ of the form $\wh{k(t)^a}$ is totally ordered with respect to inclusion.
\end{theor}
\begin{proof}
Consider two subfields $\wh{k(x)^a}$ and $\wh{k(y)^a}$ of $K$ and assume that neither of them contains the other one. Then $\{x,y\}$ is a topgebraically independent set, contradicting that $\topdeg(K/k)=1$.
\end{proof}

\begin{question}\label{subfieldsq}
What is the structure of the totally ordered set $S_{K/k}$?
\end{question}

\begin{rem}\label{subfieldsrem}
If $K=\wh{k(t)^a}$ then $S_{K/k}=\End_k(K)/\Aut_k(K)$ and the projection $q\:\End_k(K)\to S_{K/k}$ associates to $\phi\in\End_k(K)$ the subfield $\phi(K)$. Thus, $q$ can be viewed as an invariant of endomorphisms that measures how far $\phi$ is from being invertible. However, to make this abstract invariant a useful one we should know something about the structure of $S_{K/k}$.
\end{rem}

\begin{cor}\label{cardcor}
Assume that $K/k$ is an extension of analytic fields such that $\topdeg(K/k)=1$. Let $\kappa$ be the cardinality of $k$ and $\omega=\max(\kappa,2^{\aleph_0})$. Then the cardinality of $K$ does not exceed ${2^\omega}$.
\end{cor}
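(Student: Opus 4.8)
The plan is to reduce the statement to a cardinality bound on a totally ordered chain of ``one--variable'' subfields of $\wh{K^a}$, and then estimate that chain. Since $\topdeg(K/k)=1$, the extension $K/k$ is not topgebraic, so $K$ has topologically transcendental elements, and no two elements of $K$ are topgebraically independent over $k$. Arguing exactly as in the proof of Theorem~\ref{subfieldsth} (but only using that $\{x,y\}\subseteq K$ cannot be topgebraically independent), the family $\calF=\{\,\wh{k(x)^a}\subseteq\wh{K^a}\ :\ x\in K\,\}$ is totally ordered by inclusion, and $K\subseteq\bigcup\calF$ because $x\in\wh{k(x)^a}$ for every $x$. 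Each $\wh{k(x)^a}$ is the completion of $k(x)^a$, a field of cardinality at most $\max(\kappa,\aleph_0)$, so it has cardinality at most $\max(\kappa,\aleph_0)^{\aleph_0}\le 2^\omega$. Hence it suffices to show $|\calF|\le 2^\omega$, since then $|K|\le|\calF|\cdot 2^\omega=2^\omega$.

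To bound $\calF$ I would fix a well-ordering $\prec$ of $K$ and build a continuous increasing chain $(L_\alpha)$ of subfields of $\wh{K^a}$: set $L_0=\wh{k(t_0)^a}$ with $t_0$ the $\prec$-least topologically transcendental element; at a successor stage let $x_\alpha$ be the $\prec$-least element of $K\setminus L_\alpha$ and put $L_{\alpha+1}=\wh{k(x_\alpha)^a}$; at a limit stage put $L_\lambda=\overline{\bigcup_{\beta<\lambda}L_\beta}$. The key inclusion $L_\alpha\subseteq L_{\alpha+1}$ is verified by transfinite induction: for each successor- or bottom-stage generator $x_\beta$ with $\beta\le\alpha$ one has $L_\beta\subseteq L_\alpha$, hence $x_\alpha\notin\wh{k(x_\beta)^a}$, i.e.\ $x_\alpha$ is topologically transcendental over $\wh{k(x_\beta)}$; since $\{x_\beta,x_\alpha\}\subseteq K$ cannot be topgebraically independent, $x_\beta$ must be topgebraic over $\wh{k(x_\alpha)}$, so $\wh{k(x_\beta)^a}\subseteq\wh{k(x_\alpha)^a}$, and the limit stages are absorbed because $\wh{k(x_\alpha)^a}$ is complete, hence closed in $\wh{K^a}$. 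A routine ``least uncovered element'' argument then shows $K\subseteq\bigcup_\alpha L_{\alpha+1}$, so the chain exhausts $K$. Now each $L_{\alpha+1}=\wh{k(x_\alpha)^a}$ has cardinality $\le 2^\omega$ and contains the distinct elements $x_\beta$ for $\beta\le\alpha$, so every ordinal $\alpha$ occurring satisfies $|\alpha|\le 2^\omega$; combined with the observation that at a limit stage of uncountable cofinality $\bigcup_{\beta<\lambda}L_\beta$ is already complete (a Cauchy sequence lies inside a single $L_\beta$), this controls the length of the chain and yields $|\calF|\le 2^\omega$ and $|K|\le 2^\omega$.

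Everything except the final count is routine (the total order, the cardinality of a single $\wh{k(x)^a}$, and the verification that the chain exhausts $K$); the real difficulty is the cardinal bookkeeping for the length of the chain. A naive estimate only gives that the chain has length $\le(2^\omega)^+$, hence $|K|\le(2^\omega)^+$, and bringing this down to $2^\omega$ means ruling out a strictly increasing, cofinal chain of subfields $\wh{k(x)^a}$ (each of cardinality $\le 2^\omega$) of regular length $(2^\omega)^+$ --- equivalently, controlling how ``long'' an extension of topological transcendence degree one can be. This is the delicate point, and it is intertwined with the behaviour of iterated non-invertible endomorphisms discussed in Question~\ref{topdeg1q}; the care needed is precisely the interplay between the uniform cardinality bound $2^\omega$ on the individual pieces and the completeness that is forced at the limit stages of uncountable cofinality.
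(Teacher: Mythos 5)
You correctly identify the central structure (the totally ordered family $\{\wh{k(x)^a}\}$ covering $K$, and the injection of each initial segment into the corresponding field), and you correctly flag where your estimate breaks down. The gap is exactly what you say: your argument only gives $|K|\le(2^\omega)^+$, which is strictly weaker than the claimed $|K|\le 2^\omega$, and the remark about limit stages of uncountable cofinality does not repair this --- it controls nothing about the \emph{length} of the chain, only the (irrelevant) question of whether the union at a limit stage is already complete.

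The missing ingredient is a sharper bound on the cardinality of a single $\wh{k(x)^a}$. You write $|\wh{k(x)^a}|\le\max(\kappa,\aleph_0)^{\aleph_0}\le 2^\omega$, but in fact $\max(\kappa,\aleph_0)^{\aleph_0}=\omega$: if $\kappa\le 2^{\aleph_0}$ this is $(2^{\aleph_0})^{\aleph_0}=2^{\aleph_0}=\omega$, and the paper uses this identity in general. With the bound $|K_j|\le\omega$ rather than $\le 2^\omega$, the injection $j'\mapsto x_{j'}$ of the cut $J_{<j}$ into $K_j$ gives $|J_{<j}|\le\omega$ for every $j$. A totally ordered set all of whose proper initial segments have cardinality $\le\omega$ has cardinality $\le\omega^+\le 2^\omega$ (pick a cofinal well-ordered chain; its length cannot exceed $\omega^+$ since otherwise some cut would be too large, and then $J$ is a union of $\le\omega^+$ segments each of size $\le\omega$). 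Hence $|J|\le 2^\omega$ and $|K|\le|J|\cdot\omega\le 2^\omega$. Your chain already recovers the bound $|\alpha|\le|L_{\alpha+1}|$; you were one cardinal level off only because of the coarse estimate on $|L_{\alpha+1}|$.

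Finally, the transfinite recursion and the well-ordering bookkeeping are unnecessary overhead: once you know the family is totally ordered and each cut embeds into a single field of cardinality $\le\omega$, the bound on $|J|$ follows directly, with no need to build the chain $(L_\alpha)$ by hand.
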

\begin{proof}
Replacing $K$ by $\whKa$ we can assume that $K$ is algebraically closed. Let $\{K_j\}_{j\in J}$ denote the totally ordered family of all subfields of $K$ of the form $K_j=\wh{k(x_j)^a}$. Note that $k(x)^a$ is of cardinality $\rho=\max(\kappa,\aleph_0)$ and hence $\card(K_j)\le\rho^{\aleph_0}=\omega$. The cardinality of any cut $J_{<j}=\{j'\in J|\ j'<j\}$ does not exceed $\omega$ because sending $j'$ to $x_{j'}$ one obtains an embedding $J_{<j}\into K_j$. Therefore $\card(J)\le 2^\omega$, and since $K=\cup_{j\in J} K_j$ we obtain that $\card(K)\le\omega2^\omega=2^\omega$.
\end{proof}

In fact, the above bound can be improved using Poonen's results on maximal (or spherical) completions. We briefly indicate the argument.

\begin{rem}
(i) The case when $\kappa<2^{\aleph_0}$ can only happen when the valuation of $k$ is trivial. In the sequel, assume that $|k^\times|\neq 1$ and so $\omega=\kappa$.

(ii) Note that $K/K_j$ is immediate and so $\card(K)$ is bounded by the cardinality of the maximal completion $L$ of $K_j$, see \cite[Therem~2]{Poonen}. It follows from the description of $L$ as a Mal'cev-Neumann field in \cite[Section~4]{Poonen} that $$\card(L)=\card(|K_j^\times|)\card(\tilK_j)^{\aleph_0}=\max(2^{\aleph_0},\card(\tilK_j))\le\kappa.$$
Hence $\card(L)=\card(k)=\kappa$.
\end{rem}

\bibliographystyle{amsalpha}
\bibliography{pathologies}

\end{document}